\providecommand{\keywords}[1]{\textbf{\textit{Keywords: }} #1}
\providecommand{\classification}[1]{\textit{2020 Mathematics Subject Classification: } #1}
\def\BState{\State\hskip-\ALG@thistlm}
\newtheoremstyle{Definition}
  {0.2cm}                   
  {0.2cm}                   
  {\normalfont}           
  {}                      
  {\bfseries}  						
  {.}                     
  { }              				
  {}
\newtheoremstyle{Theorem}
  {0.2cm}                   
  {0.2cm}                   
  {\itshape}           		
  {}                      
  {\bfseries}  						
  {.}                     
  { }              				
  {}
\theoremstyle{Theorem}
	\newtheorem{cor}{Corollary}
	\newtheorem{prop}{Proposition}
	\newtheorem{lem}{Lemma}
	\newtheorem{thm}{Theorem}
	\newtheorem{ass}{Assumption}
\theoremstyle{Definition}
	\newtheorem{example}{Example}
	\newtheorem{rem}{Remark}
\newcommand{\Levy}{L\'{e}vy }
\newcommand{\upd}{\mathrm{d}}
\begin{document}




\title{Approximation and Error Analysis of Forward-Backward SDEs driven by General L\'evy Processes using Shot Noise Series Representations} 
\author[a]{Till Massing\thanks{Faculty of Economics, University of
Duisburg-Essen, Universit{\"{a}}tsstr.~12, 45117 Essen, Germany.\\E-Mail:
till.massing@uni-due.de}}

\maketitle

\begin{abstract}
We consider the simulation of a system of decoupled forward-backward stochastic differential equations (FBSDEs) driven by a pure jump L\'evy process $L$ and an independent Brownian motion $B$. We allow the L\'evy process $L$ to have an infinite jump activity. Therefore, it is necessary for the simulation to employ a finite approximation of its L\'evy measure. We use the generalized shot noise series representation method by \cite{Rosinski2001} to approximate the driving L\'evy process $L$. We compute the $L^p$ error, $p\ge2$, between the true and the approximated FBSDEs which arises from the finite truncation of the shot noise series (given sufficient conditions for existence and uniqueness of the FBSDE). We also derive the $L^p$ error between the true solution and the discretization of the approximated FBSDE using an appropriate backward Euler scheme.
\end{abstract}

\keywords{Decoupled forward-backward SDEs with jumps; L\'evy processes; Shot noise series representation; Discrete-time approximation; Euler Scheme}\\
\classification{60H10; 60H35; 65C05}
\microtypesetup{activate=true}

\section{Introduction}\label{sec:intro}

We consider a system of decoupled forward-backward stochastic differential equations (FBSDE) with jumps of the type
\begin{align}
X_t&=X_0+\int_0^tb (s,X_s)\upd s+\int_0^ta(s,X_s)\upd B_s+\int_0^t\int_{\mathbb{R}}h(s,X_{s-})e\widetilde{\mu}(\upd e,\upd s)\label{eq:XSDEv3}\\
Y_t&= g(X_T)+\int_t^Tf (s,X_s,Y_s,Z_s,\Gamma_s)\upd s-\int_t^TZ_s\upd B_s-\int_t^T\int_{\mathbb{R}}U_s(e)e\widetilde{\mu}(\upd e,\upd s)\label{eq:YSDEv3},
\end{align}
for $t\in[0,T]$, where $B$ is a Brownian motion and $L$ is an independent pure jump \Levy process with \Levy measure $\nu$ where $\mu(\upd e,\upd t)=\nu(\upd e)\upd t$ is the corresponding Poisson measure and $\widetilde{\mu}(\upd e,\upd s)=\mu(\upd e,\upd s)-\nu(\upd e)\upd s$ the compensated Poisson measure. Let $\Gamma_s=\int_{\mathbb{R}}\rho(e)U_s(e)e\nu(\upd e)$. We discuss the case where $L$ has an infinite jump activity, i.e., $\nu(\mathbb{R})=\infty$. (Assumptions on the functions $a,b,f,g,h$ are discussed below.) For path simulation it is therefore necessary to employ a finite approximation of the \Levy measure first. Afterwards we have to discretize the FBSDE. We are interested in path simulation and the associated error between the true solution and the approximate solution of \eqref{eq:XSDEv3}-\eqref{eq:YSDEv3}.

Backward SDEs are a vibrant research topic since the seminal paper of \cite{Pardoux1990}. They proved existence and uniqueness in the $L^2$ sense of a solution of a BSDE (without jumps) under the assumptions of square integrability of the terminal condition and Lipschitz continuity of the generator $f$. Since then BSDEs and/or FBSDEs have been analyzed in many directions.

One strand of the literature treats extensions of the existence and uniqueness result of \cite{Pardoux1990} by relaxing the underlying assumptions or extending the BSDE under consideration. For example, \cite{Tang1994} and \cite{Barles1997} included jumps into the BSDE. \cite{Briand2003} discussed the existence and uniqueness in an $L^p$ sense given a Brownian filtration. \cite{Buckdahn1994} did the same including jumps. Since then several papers have shown $L^p$ existence and uniqueness with a generalized filtration under weak assumptions, e.g., \cite{Kruse2016}, \cite{Yao2017}, and \cite{Eddahbi2017}.
FBSDEs are the Markovian special case of BSDEs where the terminal condition is determined by the forward SDE. 

Another strand of the literature covers possible areas of applications of FBSDEs. For example, FBSDEs turned out to be useful in mathematical finance, see \cite{Karoui1997} and \cite{Delong2013}, in optimal control, see \cite{Tang1994}, or for partial differential equations, see \cite{Pardoux1999} or also the book \cite{Pardoux2014}.

A third strand is about the discrete-time approximation of FBSDEs, which this paper aims to contribute to. 
A popular approach is a backward Euler scheme, see \cite{Zhang2004} and \cite{Bouchard2004} who derived the $L^2$ approximation error of the scheme. \cite{Gobet2007} generalized this by computing the $L^p$ error. \cite{Bouchard2008} derived the $L^2$ error for FBSDEs containing a finite number of jumps. In the case of an infinite jump activity, \cite{Aazizi2013} proposed a two-step approximation by first approximating the small jumps by a Brownian motion to have only finitely many big jumps, and second by discretizing according to \cite{Bouchard2008}. \cite{Aazizi2013} then derived the $L^2$ approximation-discretization error. The approach follows \cite{Kohatsu-Higa2010} who approximated forward SDEs with infinitely many jumps by finitely many jumps.

This paper contributes to literature in the following way. First, we extend the results of \cite{Aazizi2013} for the $L^2$ approximation-discretization error to a more general $L^p$, $p\ge2$, version by incorporating a jump-adapted Euler scheme. Second, instead of partitioning the \Levy measure into jumps larger or smaller than a certain level we allow for various truncation functions using the approach of shot noise series representations by \cite{Rosinski2001}, which may be the more efficient way for a certain \Levy process (some examples are provided below). Third, we enlarge the class of pure jump \Levy processes to these which do not fulfill the \cite{Asmussen2001} assumption for the approximation of small jumps. All in all, we obtain a statement for the $L^p$ error for general \Levy processes. We find that the error depends on $N^{-1/2}$, where $N$ is the number of time steps, and on the $p$th and second moments of the \Levy measure of the discarded jumps.

The remainder of this paper is organized as follows. In Section \ref{sec:settings} we discuss the settings in more detail. In Section \ref{sec:approx} we derive an upper bound for the error of the approximation with a finite jump measure. In Section \ref{sec:discrete} we present the discrete Euler scheme and prove an upper bound for the discretization error.

\section{Settings}\label{sec:settings}

This section introduces the setting and notation needed throughout this paper. Let $(\Omega,\mathcal{F},(\mathcal{F}_t)_{0\le t\le T},\mathbb{P})$ be a filtered probability space such that $\mathcal{F}_0$ contains the $\mathbb{P}$-null sets, $\mathcal{F}_T=\mathcal{F}$, and $(\mathcal{F}_t)$ satisfies the usual assumptions. We assume that $(\mathcal{F}_t)$ is generated by a one-dimensional Brownian motion $B$ and an independent Poisson measure $\mu$ on $[0,T]\times\mathbb{R}$ with intensity $\nu(\upd e)\upd t$, where $\nu$ is a \Levy measure on $\mathbb{R}$ and $\mathbb{R}$ is equipped with the Borel set $\mathcal{B}:=\mathcal{B}(\mathbb{R})$.
We assume that the \Levy measure $\nu$ satisfies $\int_{\mathbb{R}}(1\wedge |e|^2)\nu(\upd e)\le K<\infty$, for a constant $K>0$ (the Lipschitz constant from below) and that $\nu(\mathbb{R})=\infty$. If $\nu(\mathbb{R})<\infty$, a finite jump approximation would not be necessary and we could skip Section \ref{sec:approx}.

Furthermore, we assume that
\begin{equation}
\label{eq:measureass}
\int_{\mathbb{R}}|e|^p\nu(\upd e)<\infty,
\end{equation}
for $p\ge2$. This implies that the $p$th moment of $L_t$ for each $t\in[0,T]$ is finite.
We denote by $\widetilde{\mu}(\upd e,\upd s)=\mu(\upd e,\upd s)-\nu(\upd e)\upd s$ the compensated Poisson measure corresponding to $\mu$.

For $p\ge2$ we define the normed spaces on $[r,t]$, $r\le t$,
\begin{itemize}
\item $\mathcal{S}_{[r,t]}^p$ is the set of real-valued adapted càdlàg processes $Y$ such that
\begin{equation}
\label{eq:Ynorm}
||Y||_{\mathcal{S}_{[r,t]}^p}:= \mathbb{E}\left[\sup_{r\le s\le t}|Y_s|^p\right]^{1/p}<\infty.
\end{equation}
\item $\mathbb{H}_{[r,t]}^p$ is the set of progressively measurable $\mathbb{R}$-valued processes $Z$ such that
\begin{equation}
\label{eq:Znorm}
||Z||_{\mathbb{H}_{[r,t]}^p}:= \mathbb{E}\left[\left(\int_r^t|Z_s|^2\upd s\right)^{p/2}\right]^{1/p}<\infty.
\end{equation}
\item $\mathbb{L}_{\mu,[r,t]}^p$ is the set of $(\mathcal{P}\otimes \mathcal{B})$-measurable maps $U:\Omega\times[0,T]\times \mathbb{R}\rightarrow\mathbb{R}$ such that
\begin{equation}
\label{eq:Unorm}
||U||_{\mathbb{L}_{\mu,[r,t]}^p}:=\mathbb{E}\left[\left(\int_r^t\int_{\mathbb{R}}|U_s(e)e|^2\nu(\upd e)\upd s\right)^{p/2}\right]^{1/p}<\infty,
\end{equation}
where $\mathcal{P}$ is the $\sigma$-algebra of $(\mathcal{F}_t)$-predictable subsets of $\Omega\times[0,T]$.
\item $\mathbb{L}_{\nu}^p$ is the set of measurable maps $U: \mathbb{R}\rightarrow\mathbb{R}$ such that
\begin{equation}
\label{eq:Unormnu}
||U||_{\mathbb{L}_{\nu}^p}:=\left(\int_{\mathbb{R}}|U(e)e|^p\nu(\upd e)\right)^{1/p}<\infty.
\end{equation}
\item The space $\mathcal{E}_{[r,t]}^p:=\mathcal{S}_{[r,t]}^p\times\mathbb{H}_{[r,t]}^p\times\mathbb{L}_{\mu,[r,t]}^p$ is endowed with the norm
\begin{equation}
\label{eq:allnorm}
||(Y,Z,U)||_{\mathcal{E}_{[r,t]}^p}:=\left(||Y||_{\mathcal{S}_{[r,t]}^p}^p+||Z||_{\mathbb{H}_{[r,t]}^p}^p+||U||_{\mathbb{L}_{\mu,[r,t]}^p}^p\right)^{1/p}.
\end{equation}
\end{itemize}
In the remainder we omit the subscripts if $[r,t]=[0,T]$, e.g., $\mathcal{E}^p:=\mathcal{E}_{[0,T]}^p$.

We introduce the set of assumptions needed throughout the proofs. Note that these assumptions are not the minimal ones needed for existence and uniqueness. However they are not overly restrictive and used frequently throughout the literature, e.g., \cite{Bouchard2008} and \cite{Aazizi2013}.
\begin{ass}\label{ass:Lipschitz}
\begin{enumerate}
\item[(i)] Let $a:\mathbb{R}\times\mathbb{R}\to\mathbb{R}$, $b:\mathbb{R}\times\mathbb{R}\to\mathbb{R}$, $h:\mathbb{R}\times\mathbb{R}\to\mathbb{R}$ be Lipschitz continuous functions w.r.t.~$x$ and $\frac{1}{2}$-Hölder continuous w.r.t.~$t$, i.e., for a constant $K>0$
\begin{equation}
\label{eq:LipschitzX}
|b(t,x)-b(t',x')|+|a(t,x)-a(t',x')|+|h(t,x)-h(t',x')|+|g(t,x)-g(t',x')|\le K \left(|t-t'|^{1/2}+|x-x'|\right)
\end{equation}
is satisfied for all $(t,x),(t',x')\in[0,T]\times\mathbb{R}$.
\item[(ii)] Let $f:[0,T]\times\mathbb{R}\times\mathbb{R}\times\mathbb{R}\times\mathbb{R}\to\mathbb{R}$ such that it is Lipschitz continuous w.r.t.~$(x,y,z,q)$ and $\frac{1}{2}$-Hölder continuous w.r.t.~$t$, i.e., for a constant $K>0$
\begin{equation}
\label{eq:LipschitzY}
|f(t,x,y,z,q)-f(t,x',y',z',q')|\le K(|t-t'|^{1/2}+|x-x'|+|y-y'|+|z-z'|+|q-q'|)
\end{equation}
is satisfied for all $(t,x,y,z,q),(t',x',y',z',q')\in[0,T]\times\mathbb{R}\times\mathbb{R}\times\mathbb{R}\times\mathbb{R}$.
\item[(iii)]
Let $\rho:\mathbb{R}\to\mathbb{R}$ be a measurable function such that for a constant $K>0$
\begin{equation}\label{eq:boundonrho}
|\rho(e)|\le K(1\wedge|e|),
\end{equation}
for all $e\in\mathbb{R}$.
\item[(iv)] For $p\ge2$ the integrability condition
\begin{equation}
\label{eq:assumptiononf}
\mathbb{E}\left[|g(X_T)|^p+\int_0^T|f(t,0,0,0,0)|^p\upd t \right]<\infty.
\end{equation}
is satisfied.
\end{enumerate}
\end{ass}

To prove Theorem \ref{thm:EulerErr} we need the following additional assumption. A discussion about it can be found in Remark \ref{rem:Ass2}.
\begin{ass}\label{ass:invofh}
The function $h(t,x)$ is differentiable in the $x$ variable with derivative $h_x(t, x)$ such that for each $e\in\mathbb{R}$ the function
$\ell(t,x;e):=h_x(t,x)e+1$
satisfies one of the following conditions
\begin{equation}
\ell(t,x;e)\ge K^{-1}\ \ \ \mathrm{or}\ \ \ \ell(t,x;e)\le-K^{-1}.
\end{equation}
uniformly in $(t,x)\in[0,T]\times\mathbb{R}$.
\end{ass}
We choose a constant $K$ that fulfills both assumptions.
We next mention some important facts on \Levy processes which we need to approximate the infinite \Levy measure. We opt for the approximation using series representations which goes back to \cite{Rosinski2001}, see \cite{Yuan2021} for a recent overview. Let $L$ be a pure jump \Levy process on $(\Omega,\mathcal{F},(\mathcal{F}_t)_{0\le t\le T},\mathbb{P})$ with \Levy measure $\nu$ as discussed above. The famous L\'evy-Itô decomposition states that $L$ can be written as
\begin{equation}
\label{eq:LevyItorewritten2}
L_t=\int_0^t\int_{|e|\le1}e\widetilde{\mu}(\upd e,\upd s)+\int_0^t\int_{|e|>1}e\mu(\upd e,\upd s)
\end{equation}
for $t\in[0,T]$.

\cite{Rosinski2001} proved the useful result that it is possible to express jump-type \Levy processes as an infinite series. We now summarize his theory of \emph{generalized shot noise series representations}. We only present the one-dimensional case (it is of course also available in $d$ dimensions). Suppose that the \Levy measure $\nu$ can be decomposed as
\begin{equation}
\nu(B)=\int_0^{\infty}\mathbb{P}[H(r,V)\in B]\upd r,\ \ \ B\in\mathcal{B},
\end{equation}
where $V$ is a random variable in some space $\mathcal{V}$ and $H:(0,\infty)\times\mathcal{V}\to\mathbb{R}$ is a measurable function such that for every $v\in\mathcal{V}$, $r\mapsto |H(r,v)|$ is nonincreasing. Then, it holds that
\begin{equation}\label{eq:shotnoise}
L_t\stackrel{\mathcal{L}}{=}\sum_{i=1}^{\infty}H\left(\frac{G_i}{T},V_i\right)\mathds{1}_{[0,t]}(T_i)-tc_i,
\end{equation}
for $t\in[0,T]$, where $\{G_i\}_{i\in\mathbb{N}}$ are the arrival times of a standard Poisson process (i.e., a sequence of partial sums of i.i.d.~unit rate exponential random variables), $\{V_i\}_{i\in\mathbb{N}}$ are i.i.d.~copies of the random variable $V$ independent of $\{G_i\}_{i\in\mathbb{N}}$, $\{T_i\}_{i\in\mathbb{N}}$ are i.i.d.~uniforms on $[0,T]$ independent of $\{G_i\}_{i\in\mathbb{N}}$ and $\{V_i\}_{i\in\mathbb{N}}$, and $\{c_k\}_{k\in\mathbb{N}}$ are centering constants such that
\begin{equation}
c_i=\int_{i-1}^i\int_{|x|\le 1}x\mathbb{P}[H(r,V)\in\upd x]\upd r.
\end{equation}
Rosi\'nski's theorem offers several choices for different series representations. The most convenient representation is case-dependent given the specific \Levy measure. Well-known special cases include the inverse \Levy measure method, the rejection method or the thinning method, see \cite{Yuan2021} for details.

To obtain a feasible numerical algorithm one has to truncate the infinite series in \eqref{eq:shotnoise}. Instead of truncating the series deterministically, i.e., after $n$ summands, we choose a random truncation
\begin{equation}\label{eq:truncshotnoise}
L_t^{n}:=\sum_{\{i:G_i\le nT\}}H\left(\frac{G_i}{T},V_i\right)\mathds{1}_{[0,t]}(T_i)-tc_i,
\end{equation}
where we cut off all summands if $G_i> nT$, which depends on the random Poisson arrival times $\{G_i\}$. The reason is that with the random truncation $L^n$ itself is a compound Poisson process and hence a proper \Levy process with \Levy measure
\begin{equation}
\label{eq:truncmeasure}
\nu^n(B)=\int_0^n\mathbb{P}[H(r,U)\in B]\upd r,\ \ \ B\in \mathcal{B}(\mathbb{R}^d).
\end{equation}
Note that the truncated \Levy measure only describes finitely many jumps, i.e., $\nu^n(\mathbb{R})=n<\infty$. In the following we use the notation $\bar{\nu}^n(\upd e):=\nu(\upd e)-\nu^n(\upd e)$, which is the \Levy measure of the infinitely many small jumps that are discarded.
The important quantity which determines the approximation error of FBSDEs will be in terms of the second and $p$th moments of the \Levy measure $\bar{\nu}^n$, which are defined as
\begin{equation}
\label{eq:sigmaerror}
\sigma^p(n):=\int_{\mathbb{R}}|e|^p\bar{\nu}^n(\upd e),
\end{equation}
for $p\ge2$. 
By $\mu^n$ we denote the Poisson measure with intensity measure $\nu^n(\upd e)\upd t$ and by $\bar{\mu}^n$ the Poisson measure with intensity measure $\bar{\nu}^n(\upd e)\upd t$. Let $\widetilde{\mu^n}$ and $\widetilde{\bar{\nu}^n}$ be the corresponding compensated Poisson measures. Clearly, $\nu=\nu^n+\bar{\nu}^n$, $\mu=\mu^n+\bar{\mu}^n$ and $\widetilde{\mu}=\widetilde{\mu^n}+\widetilde{\bar{\mu}^n}$.

We give some examples for special \Levy processes with different series representations.
\begin{example}
We first consider the case where $L$ is a gamma process, i.e., a \Levy process with \Levy measure $\alpha \frac{\exp(-\beta e)}{e}\mathds{1}(e>0)\upd e$, with $\alpha>0,\beta>0$. There are four widely used series representation for the gamma process \cite[]{Rosinski2001}.
\begin{align}
L_t&\stackrel{\mathcal{L}}{=}\frac{1}{\beta}\sum_{i=1}^{\infty}E_1^{-1}\left(\frac{G_i}{\alpha T}\right)\mathds{1}_{[0,t]}(T_i),&\textrm{(inverse  \Levy  measure  method)}\\
L_t&\stackrel{\mathcal{L}}{=}\frac{1}{\beta}\sum_{i=1}^{\infty}\frac{1}{\mathrm{e}^{G_i/(\alpha T)}-1}\mathds{1}\left(\frac{\mathrm{e}^{G_i/(\alpha T)}}{\mathrm{e}^{G_i/(\alpha T)}-1}\mathrm{e}^{-(\mathrm{e}^{G_i/(\alpha T)})^{-1}}\ge V_i^{(1)}\right)\mathds{1}_{[0,t]}(T_i),&\textrm{(rejection method)}\\
L_t&\stackrel{\mathcal{L}}{=}\frac{1}{\beta}\sum_{i=1}^{\infty}V_i^{(2)}\mathds{1}\left(V_i^{(2)}G_i\le \alpha T\right)\mathds{1}_{[0,t]}(T_i),&\textrm{(thinning method)}\\
L_t&\stackrel{\mathcal{L}}{=}\frac{1}{\beta}\sum_{i=1}^{\infty}\mathrm{e}^{-\frac{G_i}{\alpha T}}V_i^{(2)}\mathds{1}_{[0,t]}(T_i),&\textrm{(Bondesson method)}
\end{align}
for $t\in[0,T]$, where $\{G_i\}_{i\in\mathbb{N}}$ are the arrival times of a standard Poisson process, $\{V_i^{(1)}\}_{i\in\mathbb{N}}$ are i.i.d.~uniforms on $[0,1]$, $\{V_i^{(2)}\}_{i\in\mathbb{N}}$ are i.i.d.~standard exponential r.v.'s, $\{T_i\}_{i\in\mathbb{N}}$ are i.i.d.~uniforms on $[0,T]$. All sequences are independent of each other. $E_1(x):=\int_u^{\infty}\frac{\mathrm{e}^{-u}}{u}\upd u$ is the exponential integral and $E_1^{-1}$ its inverse. The centering constants $c_i$ are all equal to zero in this case. \cite{Imai:2013:NIL:2489318.2489552} showed the superiority of the inverse \Levy measure method compared with the other methods when employing a finite truncation, i.e., its $\sigma^p(n)$ is smaller for $p\ge2$. However, in practice it may be easier to work with the other methods. For example, for the gamma process the inverse \Levy measure method relies on numerical inversion of the exponential integral function. The method of Bondesson may be the easiest to use in this case. Note that the gamma process does not fulfill the \cite{Asmussen2001} assumption for the approximation of small jumps.
\end{example}

\begin{example}
The second example are tempered stable \Levy processes, see \cite{Rosinski2007}. For simplicity, we here consider the classical tempered stable subordinator $L$ with \Levy measure $\delta \frac{\exp(-\lambda e)}{e^{-1-\alpha}}\mathds{1}(e>0)\upd e$, with $\alpha\in(0,1),\delta>0,\lambda>0$. Again, the inverse \Levy measure method, the rejection method, and the thinning method can be derived, see \cite{Kawai2011}. We omit these here and instead only present the method derived by \cite{Rosinski2007} which works best in practice. As in the example above, the inverse \Levy measure method is theoretically superior but numerically challenging. The series representation of \cite{Rosinski2007} is
\begin{equation}
L_t\stackrel{\mathcal{L}}{=}\sum_{i=1}^{\infty}\left(\left(\frac{\alpha G_i}{\delta T}\right)^{-1/\alpha}\wedge\frac{V_i U_i^{1/\alpha}}{\lambda}\right)\mathds{1}_{[0,t]}(T_i)
\end{equation}
for $t\in[0,T]$, where $\{G_i\}_{i\in\mathbb{N}}$ are the arrival times of a standard Poisson process, $\{U_i\}_{i\in\mathbb{N}}$ are i.i.d.~uniforms on $[0,1]$, $\{V_i\}_{i\in\mathbb{N}}$ are i.i.d.~standard exponential r.v.'s, $\{T_i\}_{i\in\mathbb{N}}$ are i.i.d.~uniforms on $[0,T]$. All sequences are independent of each other. The centering constants $c_i$ are all equal to zero in this case.
\end{example}

\begin{rem}\label{rem:boundonrho}
The \Levy measures $\nu$ and $\bar{\nu}^n$ are assumed to be infinite, i.e., $\nu(\mathbb{R})=\bar{\nu}^n(\mathbb{R})=\infty$ and thus we cannot apply Jensen's inequality to integrals of the type $\int \nu(\upd e)$. Assumption \ref{ass:Lipschitz}.(iii) provides a necessary bound. Indeed, by Hölder's inequality
\begin{equation}
\left(\int_{\mathbb{R}}\rho(e) U_s(e)e\nu(\upd e)\right)^2\le \int_{\mathbb{R}} U_s(e)^2e^2\nu(\upd e)\int_{\mathbb{R}}\rho(e)^2\nu(\upd e)\le K^3 \int_{\mathbb{R}} U_s(e)^2e^2\nu(\upd e)
\end{equation}
for $U_s\in\mathbb{L}_{\mu}^2$, because $\int(1\wedge|e|^2)\nu(\upd e)\le K<\infty$ is bounded by a finite constant.
\end{rem}


Given the approximation of \Levy processes using truncated series representations we use $\widetilde{\mu^n}$ to approximate the Poisson measure $\widetilde{\mu}$. We call $(X^n,Y^n,Z^n,U^n)$ the solution of the approximate FBSDE
\begin{align}
X_t^{n}&=X_0+\int_0^tb (s,X_s^{n})\upd s+\int_0^ta(s,X_s^{n})\upd B_s+\int_0^t\int_{\mathbb{R}}h(s,X_{s-}^{n})e\widetilde{\mu^n}(\upd e,\upd s)\label{eq:nXSDEv3}\\
Y_t^{n}&= g(X_T^{n})+\int_t^Tf (s,X_s^{n},Y_s^{n},Z_s^n,\Gamma_s^{n})\upd s-\int_t^TZ_s^{n}\upd B_s-\int_t^T\int_{\mathbb{R}}U_s^{n}(e)e\widetilde{\mu^n}(\upd e,\upd s),\label{eq:nYSDEv3}
\end{align}
where
$\Gamma_s^{n}=\int_{\mathbb{R}}\rho(e)U_s^{n}(e)e\nu^{n}(\upd e)$. The aim of the next section is to compute the approximation error between the original FBSDE \eqref{eq:XSDEv3}-\eqref{eq:YSDEv3} and the approximate FBSDE \eqref{eq:nXSDEv3}-\eqref{eq:nYSDEv3}.

\begin{rem}
In this paper we restrict ourselves to one-dimensional FBSDEs. The extension to multidimensional FBSDEs (the comparison principle, see \cite{Barles1997}, only holds for one-dimensional BSDEs) is straightforward. We can replace Itô's formula by its multidimensional counterpart in the proofs and use the same arguments and bounds to obtain the multidimensional results. We omit these details in the proofs in favor of a simpler notation.
\end{rem}

 We end this section with a notational remark. Let $C_p$ denote a generic constant depending only on $p$ and further constants including $K$, $T$, $a(0)$, $b(0)$, $f(0)$, $g(0)$, $h(0)$ and the starting value $X_0$, which may vary from step to step.

%
%

\section{Error of the approximation of the pure jump process}\label{sec:approx}

In this section we compute the $L^p$ approximation error between the original backward SDE \eqref{eq:YSDEv3} and the approximate backward SDE \eqref{eq:nYSDEv3}, defined as
\begin{align}
Err_{n,p}(Y,Z,U):= &\ \mathbb{E}\left[\sup_{0\le t\le T}|Y_t-Y_t^{n}|^p+\left(\int_0^T|Z_s-Z_s^{n}|^2\upd s\right)^{p/2}\right.\\
&+\left.\left(\int_0^T\int_{\mathbb{R}}|U_s(e)-U_s^{n}(e)|^2e^2\nu^n(\upd e)\upd s\right)^{p/2}+\left(\int_0^T\int_{\mathbb{R}}U_s(e)^2e^2\bar{\nu}^n(\upd e)\upd s\right)^{p/2}\right]^{1/p}.
\end{align}
Furthermore, we derive an upper bound for the approximation error of the forward SDE defined as
\begin{equation}
\mathbb{E}\left[\sup_{0\le t\le T}|X_t-X_t^{n}|^p\right].
\end{equation}

In the following proofs, the standard estimate for the solution of the FBSDEs is useful:
\begin{equation}\label{eq:standardestimate}
||(X,Y,Z,U)||_{\mathcal{S}^p\times\mathcal{E}^p}^p\le C_p(1+|X_0|^p),
\end{equation}
for $p\ge2$, see \cite{Bouchard2008}. In particular, the forward SDE has the estimate
\begin{equation}\label{eq:standardestimateX}
\mathbb{E}\left[\sup_{0\le t\le T}|X_t|^p\right]\le C_p(1+|X_0|^p).
\end{equation}

Because the FBSDEs are decoupled we can analyze the forward and backward components separately. We begin with an error bound for the forward SDE.
\begin{prop}\label{prop:approxX}
Let $p\ge2$. Under Assumption \ref{ass:Lipschitz} on $(\Omega,\mathcal{F},(\mathcal{F}_t),\mathbb{P})$
\begin{itemize}
\item there exists a unique solution $X$ of \eqref{eq:XSDEv3} on $[0,T]$ with $X_0=0$,
\item for any $n\in\mathbb{N}$, there exists a unique solution $X^{n}$ of \eqref{eq:nXSDEv3} on $[0,T]$ with $X_0^{n}=0$,
\end{itemize}
Moreover, there exists a constant $C_p$ such that
\begin{equation}
\label{eq:approxerrorX}
\mathbb{E}\left[\sup_{0\le t\le T}|X_t-X_t^{n}|^p\right]\le C_p \left(\sigma^p(n)+\sigma^2(n)^{p/2}\right).
\end{equation}
\end{prop}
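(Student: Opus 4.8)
The plan is to prove existence, uniqueness, and the error bound by a standard \emph{a priori estimate} argument for jump SDEs, exploiting that the difference process $X_t - X_t^n$ itself solves an SDE whose inhomogeneity comes entirely from the discarded jumps encoded by $\bar{\nu}^n$. For existence and uniqueness, since both $b,a,h$ are Lipschitz in $x$ and $\tfrac12$-Hölder in $t$ by Assumption \ref{ass:Lipschitz}(i), and since $\nu^n$ is a finite measure while $\nu$ satisfies $\int(1\wedge|e|^2)\nu(\upd e)<\infty$ together with \eqref{eq:measureass}, both SDEs fall under the classical Lipschitz theory for \Levy-driven SDEs (a Picard iteration / contraction mapping argument on $\mathcal{S}^p_{[0,T]}$). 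I would simply cite the standard estimate \eqref{eq:standardestimateX} for this, noting that $X_0 = 0$ here.

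For the error bound, the key observation is that $\widetilde{\mu} = \widetilde{\mu^n} + \widetilde{\bar{\mu}^n}$, so subtracting \eqref{eq:nXSDEv3} from \eqref{eq:XSDEv3} gives
\begin{align}
X_t - X_t^n = {}&\int_0^t\bigl(b(s,X_s)-b(s,X_s^n)\bigr)\upd s + \int_0^t\bigl(a(s,X_s)-a(s,X_s^n)\bigr)\upd B_s\\
&+ \int_0^t\!\!\int_{\mathbb{R}}\bigl(h(s,X_{s-})-h(s,X_{s-}^n)\bigr)e\,\widetilde{\mu^n}(\upd e,\upd s) + \int_0^t\!\!\int_{\mathbb{R}}h(s,X_{s-})e\,\widetilde{\bar{\mu}^n}(\upd e,\upd s).
\end{align}
The first three terms are the Lipschitz-controlled parts that will be absorbed into a Gronwall argument, while the last term is the genuine source of error driven only by the small discarded jumps. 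First I would take $\sup_{0\le u\le t}$ and the $p$th moment, then apply the Burkholder--Davis--Gundy inequality to the Brownian stochastic integral and to both compensated Poisson integrals; for jump integrals the relevant control is the Kunita / BDG inequality, which for $p\ge2$ bounds the $p$th moment of the supremum by a combination of the $p/2$ power of the predictable quadratic variation ($\int\int |\cdot|^2 \nu(\upd e)\upd s$) and the $p$th power integral ($\int\int|\cdot|^p\nu(\upd e)\upd s$). Applied to the error term $\int h(s,X_{s-})e\,\widetilde{\bar{\mu}^n}$, using the linear-growth bound on $h$ (a consequence of its Lipschitz property) and the uniform moment estimate \eqref{eq:standardestimateX}, these two contributions produce exactly $\sigma^2(n)^{p/2}$ and $\sigma^p(n)$ respectively, matching the right-hand side of \eqref{eq:approxerrorX}.

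After these estimates, the Lipschitz terms yield a bound of the form $\mathbb{E}[\sup_{0\le u\le t}|X_u-X_u^n|^p] \le C_p\bigl(\sigma^p(n)+\sigma^2(n)^{p/2}\bigr) + C_p\int_0^t \mathbb{E}[\sup_{0\le u\le s}|X_u-X_u^n|^p]\,\upd s$, where the drift contributes via Jensen/Hölder in time, the Brownian and $\widetilde{\mu^n}$ integrals via BDG combined with the Lipschitz bounds on $a$ and $h$, and crucially the finiteness of $\nu^n$ keeps the $\widetilde{\mu^n}$ martingale term under control. Closing the argument with Gronwall's lemma then absorbs the integral term and delivers \eqref{eq:approxerrorX}. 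The main obstacle I anticipate is handling the jump integrals cleanly for general $p\ge2$: one must invoke the correct form of the BDG/Kunita inequality that separates the quadratic-variation term (second moment, giving $\sigma^2(n)^{p/2}$) from the pure-jump $p$th-moment term (giving $\sigma^p(n)$), and verify that the linear-growth constant for $h$ times the uniform bound \eqref{eq:standardestimateX} indeed reproduces both moments of $\bar{\nu}^n$ as defined in \eqref{eq:sigmaerror} without extraneous factors that would spoil the clean form of the stated bound.
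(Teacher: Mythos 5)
Your proposal is correct and follows essentially the same route as the paper: the same decomposition of $X-X^n$ via $\widetilde{\mu}=\widetilde{\mu^n}+\widetilde{\bar{\mu}^n}$, the Kunita/Burkholder--Davis--Gundy inequality splitting each jump integral into a quadratic-variation term (yielding $\sigma^2(n)^{p/2}$) and a $p$th-moment term (yielding $\sigma^p(n)$), the linear-growth bound on $h$ combined with \eqref{eq:standardestimateX}, and Gronwall's lemma to close. No gaps to report.
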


\begin{proof}
The existence and uniqueness is a standard result, see \cite{Applebaum2009}. We thus only prove the bound \eqref{eq:approxerrorX} which is an easy extension of \cite{Aazizi2013}. 

Let $t\le T$. We plug in the SDEs and use the Burkholder-Davis-Gundy (see Theorem 2.11 of \cite{Kunita2004}) inequality to obtain
\begin{align}
\mathbb{E}\left[\sup_{0\le s\le t}|X_s-X_s^{n}|^p\right]  \le C_p &\left( \mathbb{E}\left[\left(\int_0^t |b (r,X_r)-b (r,X_r^{n})|\upd r\right)^p\right]\right.\\
&+\mathbb{E}\left[\left(\int_0^t|a(r,X_r)-a(r,X_r^{n})|^2\upd r\right)^{p/2}\right]\\
&+\mathbb{E}\left[\left(\int_0^t\int_{\mathbb{R}}|h(r,X_r)-h(r,X_r^{n})|^2|e|^2\nu^n(\upd e)\upd r\right)^{p/2}\right]\\
&+\mathbb{E}\left[\int_0^t\int_{\mathbb{R}}|h(r,X_r)-h(r,X_r^{n})|^p|e|^p\nu^n(\upd e)\upd r\right]\\
&+\mathbb{E}\left[\left(\int_0^t\int_{\mathbb{R}}|h(r,X_r)|^2|e|^2\bar{\nu}^n(\upd e)\upd r\right)^{p/2}\right]\\
&\left.+\mathbb{E}\left[\int_0^t\int_{\mathbb{R}}|h(r,X_r)|^p|e|^p\bar{\nu}^n(\upd e)\upd r\right]\right)\label{eq:sideproof1}.
\end{align}

By Jensen's inequality for the $\upd s$ integral, the Lipschitz assumption \eqref{eq:LipschitzX} and \eqref{eq:standardestimate},
\begin{align}
\mathbb{E}\left[\sup_{0\le s\le t}|X_s-X_s^{n}|^p\right] & \le C_p \left( \mathbb{E}\left[\int_0^t |X_r-X_r^{n}|^p\upd r\right]+\mathbb{E}\left[\left(\int_0^t\int_{\mathbb{R}}(1+|X_r|^2)|e|^2\bar{\nu}^n(\upd e)\upd r\right)^{p/2}\right]\right.\\
&\qquad \qquad \left.+\mathbb{E}\left[\int_0^t\int_{\mathbb{R}}(1+|X_r|^p)|e|^p\bar{\nu}^n(\upd e)\upd r\right]\right)\\
&\le C_p\left(\int_0^t\mathbb{E}\left[\sup_{0\le s\le r}|X_s-X_s^{n}|^p\right]\upd r + \sigma^2(n)^{p/2}+\sigma^p(n)\right).
\end{align}
Now the result follows from Gronwall's lemma.
\end{proof}

We now turn to the approximation error of the backward SDE. We start with a remark which gives some insights into the error and next we state and prove our first main result. 
\begin{rem}
Observe that, by the Burkholder-Davis-Gundy inequality,
\begin{align}
&\mathbb{E}\left[\sup_{0\le t\le T}\left|\int_t^T\int_{\mathbb{R}}U_s(e)e\widetilde{\mu}(\upd e,\upd s)-\int_t^T\int_{\mathbb{R}}U_s^{n}(e)e\widetilde{\mu^n}(\upd e,\upd s)\right|^p\right]\\
\le&\ C_p \mathbb{E}\left[\sup_{0\le t\le T}\left|\int_t^T\int_{\mathbb{R}}(U_s(e)-U_s^{n}(e))e\widetilde{\mu^n}(\upd e,\upd s)\right|^p+\sup_{0\le t\le T}\left|\int_t^T\int_{\mathbb{R}}U_s(e)e\widetilde{\bar{\mu}^n}(\upd e,\upd s)\right|^p\right]\\
\le&\ C_p \mathbb{E}\left[\left(\int_0^T\int_{\mathbb{R}}|U_s(e)-U_s^{n}(e)|^2e^2\nu^n(\upd e)\upd s\right)^{p/2}+\left(\int_0^T\int_{\mathbb{R}}U_s(e)^2e^2\bar{\nu}^n(\upd e)\upd s\right)^{p/2}\right]
\end{align}
\end{rem}

\begin{thm}\label{thm:approxY}
Let $p\ge2$. Under Assumption \eqref{ass:Lipschitz} on $(\Omega,\mathcal{F},(\mathcal{F}_t),\mathbb{P})$
\begin{itemize}
\item there exists a unique solution $(Y,Z,U)$ of \eqref{eq:YSDEv3} in $\mathcal{E}^p$,
\item for any $n\in\mathbb{N}$, there exists a unique solution $(Y^{n},Z^{n},U^{n})$ of \eqref{eq:nYSDEv3} in $\mathcal{E}^p$.
\end{itemize}
Moreover, 
there exists a constant $C_p$ such that
\begin{equation}
\label{eq:approxerrorY}
Err_{n,p}(Y,Z,U)^p 
\le C_p\left(\sigma^p(n)+\sigma^2(n)^{p/2}\right).
\end{equation}
\end{thm}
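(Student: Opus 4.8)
The plan is to reduce the backward estimate to a classical $L^p$ a priori estimate for the difference process, feeding in the forward bound of Proposition \ref{prop:approxX} and letting the discarded jumps enter as an exogenous source term. Existence and uniqueness of $(Y,Z,U)$ and $(Y^n,Z^n,U^n)$ in $\mathcal E^p$ follow from the standard $L^p$ theory for Lipschitz BSDEs with jumps (e.g.\ \cite{Kruse2016}, \cite{Bouchard2008}): by Assumption \ref{ass:Lipschitz}.(iv) the data $g(X_T)$ and $f(\cdot,0,0,0,0)$ lie in $L^p$, $f$ is Lipschitz, and its dependence on $\Gamma$ is controlled in the $\mathbb L^p_\mu$-norm through Remark \ref{rem:boundonrho}; for the approximate system $\nu^n$ is finite, so the same results apply. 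Setting $\delta Y=Y-Y^n$, $\delta Z=Z-Z^n$, $\delta U=U-U^n$ and using $\widetilde\mu=\widetilde{\mu^n}+\widetilde{\bar{\mu}^n}$ to split the jump integral, the difference satisfies
\begin{equation}
\delta Y_t=\delta g+\int_t^T\delta f_s\,\upd s-\int_t^T\delta Z_s\,\upd B_s-\int_t^T\int_{\mathbb{R}}\delta U_s(e)e\,\widetilde{\mu^n}(\upd e,\upd s)-\int_t^T\int_{\mathbb{R}}U_s(e)e\,\widetilde{\bar{\mu}^n}(\upd e,\upd s),
\end{equation}
with $\delta g=g(X_T)-g(X_T^n)$ and $\delta f_s=f(s,X_s,Y_s,Z_s,\Gamma_s)-f(s,X_s^n,Y_s^n,Z_s^n,\Gamma_s^n)$. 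This is a BSDE driven by $B$ and $\widetilde{\mu^n}$ in which the martingale generated by the discarded jumps $\widetilde{\bar{\mu}^n}$ acts as an additional (orthogonal) source.

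I would then apply It\^o's formula to $|\delta Y_t|^p$, take the supremum and expectation, and control the stochastic integrals by the Burkholder--Davis--Gundy/Kunita inequalities as in \eqref{eq:sideproof1}; for the jump parts this produces both a $(\int\int|\cdot|^2\nu^n)^{p/2}$ term and a $\int\int|\cdot|^p\nu^n$ term. The Lipschitz bound \eqref{eq:LipschitzY} splits $|\delta f_s|$ into a \emph{linear} part $K(|\delta Y_s|+|\delta Z_s|+|\delta\Gamma_s|)$ and a \emph{source} part $K|X_s-X_s^n|$. Writing $\delta\Gamma_s=\int_{\mathbb{R}}\rho(e)\delta U_s(e)e\,\nu^n(\upd e)+\int_{\mathbb{R}}\rho(e)U_s(e)e\,\bar{\nu}^n(\upd e)$ and applying the H\"older bound of Remark \ref{rem:boundonrho} to each piece, $|\delta\Gamma_s|$ is dominated by the $\delta U$- and $U$-error integrands already present in $Err_{n,p}$, so the $\delta Y,\delta Z,\delta U$ contributions can be absorbed into the left-hand side (via a small-constant argument and Gronwall's lemma), exactly as in the classical $L^p$ BSDE estimate.

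It then remains to bound the source terms. The part $K|X_s-X_s^n|$ is handled directly by Proposition \ref{prop:approxX}, contributing $C_p(\sigma^p(n)+\sigma^2(n)^{p/2})$. For the discarded-jump martingale and the $\bar{\nu}^n$-part of $\delta\Gamma$, I use the Markovian representation of the decoupled FBSDE: since $Y_s=u(s,X_s)$ with $u$ Lipschitz in space, one has $U_s(e)e=u(s,X_{s-}+h(s,X_{s-})e)-u(s,X_{s-})$, whence $|U_s(e)e|\le C(1+|X_{s-}|)|e|$. This yields $\int_{\mathbb{R}}|U_s(e)e|^2\bar{\nu}^n(\upd e)\le C(1+|X_{s-}|^2)\sigma^2(n)$ and $\int_{\mathbb{R}}|U_s(e)e|^p\bar{\nu}^n(\upd e)\le C(1+|X_{s-}|^p)\sigma^p(n)$, which the standard estimate \eqref{eq:standardestimateX} turns into $C_p\sigma^2(n)^{p/2}$ and $C_p\sigma^p(n)$; the same bound simultaneously controls the fourth term in $Err_{n,p}$. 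Collecting all contributions and applying Gronwall's lemma gives \eqref{eq:approxerrorY}.

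The main obstacle is this last step. Unlike the forward coefficient $h(s,X_{s-})$, the integrand $U_s(e)$ genuinely depends on $e$, so the clean factorization $\int_{\mathbb{R}}|e|^q\bar{\nu}^n(\upd e)=\sigma^q(n)$ is not automatic. Establishing the growth bound $|U_s(e)e|\le C(1+|X_{s-}|)|e|$ --- which reduces the backward $U$-terms to precisely the forward structure of \eqref{eq:sideproof1} --- is the crux and rests on the Lipschitz regularity of the value function $u$. A secondary difficulty is the bookkeeping for $p>2$: the Kunita inequality forces one to carry the $p$th-moment jump terms alongside the $(p/2)$-power terms throughout, and the $\Gamma$-coupling must be kept inside the $U$-norms so that it is absorbed into the left-hand side rather than treated as an uncontrolled source.
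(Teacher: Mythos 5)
Your proposal is correct in outline and shares the paper's skeleton (split $\widetilde{\mu}=\widetilde{\mu^n}+\widetilde{\bar{\mu}^n}$, It\^o on $|\delta Y_t|^p$, absorb $\delta Z$, $\delta U$ and the $\Gamma$-coupling via Remark \ref{rem:boundonrho} and small constants, Gronwall, then feed in Proposition \ref{prop:approxX}), but it resolves the crux you correctly identify --- controlling the $\bar{\nu}^n$-integrals of $U$ --- by a genuinely different device. You invoke the Markovian representation $U_s(e)e=u(s,X_{s-}+h(s,X_{s-})e)-u(s,X_{s-})$ together with the Lipschitz regularity of the value function $u$ to obtain the pointwise bound $|U_s(e)e|\le C(1+|X_{s-}|)|e|$, which factorizes the discarded-jump terms into $\sigma^2(n)$, $\sigma^p(n)$ times moments of $X$ controlled by \eqref{eq:standardestimateX}; this is legitimate under Assumption \ref{ass:Lipschitz} (Lipschitz $u$ follows from standard stability estimates for decoupled Lipschitz FBSDEs with jumps), but it imports an extra regularity result that itself needs proof or citation, and it ties the argument to the Markovian structure. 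The paper instead never bounds $U$ pointwise: in It\^o's formula the jumps of $\delta Y$ caused by the discarded Poisson measure produce, via the convexity inequality \eqref{eq:boundeta}, the term $\kappa_p\int_t^T|\delta Y_s|^{p-2}\|U_s\|_{\mathbb{L}^2_{\bar{\nu}^n}}^2\,\upd s$ with a favorable sign on the left-hand side, which is then used to absorb the $\bar{\nu}^n$-part of $\delta\Gamma$; and a second application of It\^o to $|\delta Y_t|^2$, raised to the power $p/2$ and combined with the Dzhaparidze--Valkeila moment comparison between $(\int\!\int|U|^2\bar{\nu}^n\upd s)^{p/2}$ and $(\int\!\int|U|^2\bar{\mu}^n)^{p/2}$, delivers the fourth term of $Err_{n,p}$ directly, so that the only exogenous source is $\delta X$. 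The trade-off: your route is arguably more transparent (it literally reduces the backward estimate to the forward computation in \eqref{eq:sideproof1}) but less self-contained and restricted to the Markovian case; the paper's route is purely BSDE-theoretic and handles the $U$-terms and the $(p/2)$-power norms of $\delta Z$, $\delta U$ in one unified two-step It\^o argument. Both yield \eqref{eq:approxerrorY}.
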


\begin{proof}
We omit the proof of existence and uniqueness because it can be found in the vast literature. The standard way is to first show existence and uniqueness in the space $\mathcal{E}^2$ and second show that the solution also belongs to $\mathcal{E}^p$. We refer to \cite{Barles1997}, \cite{Briand2003}, \cite{Buckdahn1994}, \cite{Kruse2016} and \cite{Eddahbi2017}. Some of their techniques also provide to be useful for the derivation of the error.

Define
\begin{align}
\delta Y_t :=&\ Y_t-Y_t^{n}\\
=&\ g(X_T)-g(X_T^{n})+\int_t^Tf (\Theta_s)\upd s-\int_t^Tf (\Theta_s^{n})\upd s-\left(\int_t^TZ_s\upd B_s-\int_t^TZ_s^{n}\upd B_s\right)\\
&-\left(\int_t^T\int_{\mathbb{R}}U_s(e)e\widetilde{\mu}(\upd e,\upd s)-\int_t^T\int_{\mathbb{R}}U_s^{n}(e)e\widetilde{\mu^n}(\upd e,\upd s)\right)\\
=&\ \delta g(X_T) + \int_t^T\delta f (\Theta_s)\upd s-\int_t^T\delta Z_s\upd B_s\\
&-\left(\int_t^T\int_{\mathbb{R}}U_s(e)e\widetilde{\bar{\mu}^n}(\upd e,\upd s)+\int_t^T\int_{\mathbb{R}}\delta U_s(e)e\widetilde{\mu^n}(\upd e,\upd s)\right),
\end{align}
where we use the notations $\delta g(X_T):=g(X_T)-g(X_T^{n})$, $\delta U_s(e):=U_s(e)-U_s^{n}(e)$, $\delta f (\Theta_s):= f (\Theta_s)-f (\Theta_s^{n})$ and $\delta\Theta_s:=(s,\delta X_s, \delta Y_s, \delta Z_s, \delta \Gamma_s):=(s,X_s-X_s^{n},Y_s-Y_s^{n},Z_s-Z_s^{n},\Gamma_s-\Gamma_s^{n})$.

\noindent \textbf{Step 1:}
We apply the Itô formula with the $C^2$-function $\eta(y)=|y|^p$ to the process $\delta Y_t$. We use that 
\begin{equation}
\frac{\partial\eta}{\partial y}(y)=py|y|^{p-2},\ \ \ \frac{\partial^2\eta}{\partial y^2}(y)=p|y|^{p-2}+p(p-2)|y|^{p-2}=p(p-1)|y|^{p-2}.
\end{equation}
Hence
\begin{align}
|\delta Y_t|^p=&\ |\delta g(X_T)|^p+ \int_t^T p\delta Y_s|\delta Y_s|^{p-2}\delta f (\Theta_s)\upd s\\
&-p\int_t^T\delta Y_{s-}|\delta Y_{s-}|^{p-2}\delta Z_s\upd B_s-\frac{1}{2}\int_t^Tp(p-1)|\delta Y_s|^{p-2}\delta Z_s^2\upd s\\
&-\int_t^T\int_{\mathbb{R}}\left(|\delta Y_{s-}+U_s(e)e|^p-|\delta Y_{s-}|^p-p\delta Y_{s-}|\delta Y_{s-}|^{p-2}U_s(e)e\right)\bar{\mu}^n(\upd e,\upd s)\\
&-\int_t^T\int_{\mathbb{R}}\left(|\delta Y_{s-}+\delta U_s(e)e|^p-|\delta Y_{s-}|^p-p\delta Y_{s-}|\delta Y_{s-}|^{p-2}\delta U_s(e)e\right)\mu^n(\upd e,\upd s)\\
&-p\int_t^T\int_{\mathbb{R}}\delta Y_{s-}|\delta Y_{s-}|^{p-2}U_s(e)e\widetilde{\bar{\mu}^n}(\upd e,\upd s)-p\int_t^T\int_{\mathbb{R}}\delta Y_{s-}|\delta Y_{s-}|^{p-2}\delta U_s(e)e\widetilde{\mu^n}(\upd e,\upd s)\label{eq:mainproof1}\\
=&\ |\delta g(X_T)|^p+ \int_t^T p\delta Y_s|\delta Y_s|^{p-2}\delta f (\Theta_s)\upd s\\
&-p\int_t^T\delta Y_{s-}|\delta Y_{s-}|^{p-2}\delta Z_s\upd B_s-\frac{1}{2}\int_t^Tp(p-1)|\delta Y_s|^{p-2}\delta Z_s^2\upd s\\
&-\int_t^T\int_{\mathbb{R}}\left(|\delta Y_{s-}+U_s(e)e|^p-|\delta Y_{s-}|^p-p\delta Y_{s-}|\delta Y_{s-}|^{p-2}U_s(e)e\right)\bar{\nu}^n(\upd e)\upd s\\
&-\int_t^T\int_{\mathbb{R}}\left(|\delta Y_{s-}+\delta U_s(e)e|^p-|\delta Y_{s-}|^p-p\delta Y_{s-}|\delta Y_{s-}|^{p-2}\delta U_s(e)e\right)\nu^n(\upd e)\upd s\\
&-\int_t^T\int_{\mathbb{R}}\left(|\delta Y_{s-}+U_s(e)e|^p-|\delta Y_{s-}|^p\right)\widetilde{\bar{\mu}^n}(\upd e,\upd s)-\int_t^T\int_{\mathbb{R}}\left(|\delta Y_{s-}+\delta U_s(e)e|^p-|\delta Y_{s-}|^p\right)\widetilde{\mu^n}(\upd e,\upd s)
\end{align}

We use a Taylor expansion of $\eta(x+y)$ around $x$.
\begin{align}\label{eq:boundeta}
\eta(x+y)-\eta(x)-\frac{\partial\eta}{\partial x}(x)y&=p(p-1)\int_0^1(1-r)|x+ry|^{p-2}|y|^2\upd r\\
&\ge p(p-1)3^{1-p}|y|^2|x|^{p-2}.
\end{align}
The inequality follows by Lemma A.4 of \cite{Yao2010}, an earlier version of \cite{Yao2017}. For the readers' convenience we state it now: Let $p\in(0,\infty)$ and let $\mathbb{B}$ be a generic real Banach space with norm $|\cdot|_{\mathbb{B}}$. For any $x,y\in\mathbb{B}$,
\begin{equation}\label{eq:Yao2010}
\int_0^1(1-\alpha)|x+\alpha y|_{\mathbb{B}}^p\upd \alpha\ge 3^{-(1+p)}|x|_{\mathbb{B}}.
\end{equation} 

The above implies
\noindent
\begin{align}
&-\int_t^T\int_{\mathbb{R}}\left(|\delta Y_{s-}+U_s(e)e|^p-|\delta Y_{s-}|^p-p\delta Y_{s-}|\delta Y_{s-}|^{p-2}U_s(e)e\right)\bar{\nu}^n(\upd e)\upd s\\
&-\int_t^T\int_{\mathbb{R}}\left(|\delta Y_{s-}+\delta U_s(e)e|^p-|\delta Y_{s-}|^p-p\delta Y_{s-}|\delta Y_{s-}|^{p-2}\delta U_s(e)e\right)\nu^n(\upd e)\upd s\\
\le& -p(p-1)3^{1-p}\int_t^T\int_{\mathbb{R}}|\delta Y_{s-}|^{p-2}|U_s(e)|^2 e^2\bar{\nu}^n(\upd e)\upd s\\
&-p(p-1)3^{1-p}\int_t^T\int_{\mathbb{R}}|\delta Y_{s-}|^{p-2}|\delta U_s(e)|^2 e^2\nu^n(\upd e)\upd s\\
=& -p(p-1)3^{1-p}\int_t^T|\delta Y_{s}|^{p-2}||U_s||_{\mathbb{L}_{\bar{\nu}^n}^2}^2\upd s-p(p-1)3^{1-p}\int_t^T|\delta Y_{s}|^{p-2}||\delta U_s||_{\mathbb{L}_{\nu^n}^2}^2\upd s.
\end{align}

\noindent Denote $\kappa_p:=p(p-1)3^{1-p}$ and by $\kappa_p\le \frac{p(p-1)}{2}$ we get that \eqref{eq:mainproof1} becomes
\begin{align}
&|\delta Y_t|^p+\kappa_p\int_t^T|\delta Y_{s-}|^{p-2}\delta Z_s^2\upd s+ \kappa_p\int_t^T|\delta Y_{s}|^{p-2}||U_s||_{\mathbb{L}_{\bar{\nu}^n}^2}^2\upd s+\kappa_p\int_t^T|\delta Y_{s}|^{p-2}||\delta U_s||_{\mathbb{L}_{\nu^n}^2}^2\upd s\\
\le &\ |\delta g(X_T)|^p + \int_t^T p\delta Y_s|\delta Y_s|^{p-2}\delta f (\Theta_s)\upd s -p\int_t^T\delta Y_{s-}|\delta Y_{s-}|^{p-2}\delta Z_s\upd B_s\\
&-\int_t^T\int_{\mathbb{R}}\left(|\delta Y_{s-}+U_s(e)e|^p-|\delta Y_{s-}|^p\right)\widetilde{\bar{\mu}^n}(\upd e,\upd s)-\int_t^T\int_{\mathbb{R}}\left(|\delta Y_{s-}+\delta U_s(e)e|^p-|\delta Y_{s-}|^p\right)\widetilde{\mu^n}(\upd e,\upd s).
\end{align}

\noindent Now we apply the Lipschitz condition \eqref{eq:LipschitzY} of $f $ to obtain
\begin{align}
&|\delta Y_t|^p+\kappa_p\int_t^T|\delta Y_s|^{p-2}\delta Z_s^2\upd s+ \kappa_p\int_t^T|\delta Y_{s}|^{p-2}||U_s||_{\mathbb{L}_{\bar{\nu}^n}^2}^2\upd s+\kappa_p\int_t^T|\delta Y_{s}|^{p-2}||\delta U_s||_{\mathbb{L}_{\nu^n}^2}^2\upd s\\
\le &\ |\delta g(X_T)|^p + K p \int_t^T \delta Y_s|\delta Y_s|^{p-2} |\delta X_s| \upd s + K p \int_t^T \delta Y_s|\delta Y_s|^{p-1} \upd s + K p \int_t^T \delta Y_s|\delta Y_s|^{p-2} |\delta Z_s| \upd s\\
& + K p \int_t^T \delta Y_s|\delta Y_s|^{p-2} \int_{\mathbb{R}}\rho(e)|U_s(e)| e\bar{\nu}^n(\upd e) \upd s + K p \int_t^T \delta Y_s|\delta Y_s|^{p-2} \int_{\mathbb{R}}\rho(e)|\delta U_s(e)|e \nu^n(\upd e) \upd s\\
& - p\int_t^T\delta Y_{s-}|\delta Y_{s-}|^{p-2}\delta Z_s\upd B_s\\
& - \int_t^T\int_{\mathbb{R}}\left(|\delta Y_{s-}+U_s(e)e|^p-|\delta Y_{s-}|^p\right)\widetilde{\bar{\mu}^n}(\upd e,\upd s)-\int_t^T\int_{\mathbb{R}}\left(|\delta Y_{s-}+\delta U_s(e)e|^p-|\delta Y_{s-}|^p\right)\widetilde{\mu^n}(\upd e,\upd s).
\end{align}

\noindent Next we use the inequality $xy\le \alpha x^2+y^2/\alpha$ for $\alpha>0$, $x,y\ge0$, the bound of $\rho$ (recall Remark \ref{rem:boundonrho}), and that $\delta Y_s\le|\delta Y_s|$ to derive
\begin{align}
&|\delta Y_t|^p+\kappa_p\int_t^T|\delta Y_s|^{p-2}\delta Z_s^2\upd s+ \kappa_p\int_t^T|\delta Y_{s}|^{p-2}||U_s||_{\mathbb{L}_{\bar{\nu}^n}^2}^2\upd s+\kappa_p\int_t^T|\delta Y_{s}|^{p-2}||\delta U_s||_{\mathbb{L}_{\nu^n}^2}^2\upd s\\
\le &\ |\delta g(X_T)|^p + Kp(1+\alpha+\beta+\gamma+\varepsilon) \int_t^T |\delta Y_s|^{p} \upd s + \frac{K p}{\alpha} \int_t^T |\delta Y_s|^{p-2} |\delta X_s|^2 \upd s+ \frac{K p}{\beta} \int_t^T |\delta Y_s|^{p-2} |\delta Z_s|^2 \upd s\\
& + \frac{K^4 p}{\gamma} \int_t^T |\delta Y_s|^{p-2} || U_s||_{\mathbb{L}_{\bar{\nu}^n}^2}^2 \upd s
 + \frac{K^4 p}{\varepsilon} \int_t^T |\delta Y_s|^{p-2} ||\delta U_s||_{\mathbb{L}_{\nu^n}^2}^2 \upd s-p\int_t^T\delta Y_{s-}|\delta Y_{s-}|^{p-2}\delta Z_s\upd B_s\\
&-\int_t^T\int_{\mathbb{R}}\left(|\delta Y_{s-}+U_s(e)e|^p-|\delta Y_{s-}|^p\right)\widetilde{\bar{\mu}^n}(\upd e,\upd s)-\int_t^T\int_{\mathbb{R}}\left(|\delta Y_{s-}+\delta U_s(e)e|^p-|\delta Y_{s-}|^p\right)\widetilde{\mu^n}(\upd e,\upd s).
\end{align}

\noindent We make use of the Lipschitz condition \eqref{eq:LipschitzX} on $g$ and Young's inequality for $|\delta Y_s|^{p-2} |\delta X_s|^2$ 
to get
\begin{align}
&|\delta Y_t|^p+\kappa_p\int_t^T|\delta Y_s|^{p-2}\delta Z_s^2\upd s+ \kappa_p\int_t^T|\delta Y_{s}|^{p-2}||U_s||_{\mathbb{L}_{\bar{\nu}^n}^2}^2\upd s+\kappa_p\int_t^T|\delta Y_{s}|^{p-2}||\delta U_s||_{\mathbb{L}_{\nu^n}^2}^2\upd s\\
\le &\ K^p|\delta X_T|^p + Kp\left(1+\alpha+\beta+\gamma+\varepsilon+\frac{p-2}{\alpha p}\right) \int_t^T |\delta Y_s|^{p} \upd s\\
& + \frac{2K }{\alpha} \int_t^T |\delta X_s|^p \upd s
+ \frac{K p}{\beta} \int_t^T |\delta Y_s|^{p-2} |\delta Z_s|^2 \upd s + \frac{K^4p}{\gamma} \int_t^T |\delta Y_s|^{p-2}|| U_s||_{\mathbb{L}_{\bar{\nu}^n}^2}^2 \upd s \\
&+ \frac{K^4 p}{\varepsilon} \int_t^T |\delta Y_s|^{p-2} ||\delta U_s||_{\mathbb{L}_{\nu^n}^2}^2 \upd s-p\int_t^T\delta Y_{s-}|\delta Y_{s-}|^{p-2}\delta Z_s\upd B_s\\
&-\int_t^T\int_{\mathbb{R}}\left(|\delta Y_{s-}+U_s(e)e|^p-|\delta Y_{s-}|^p\right)\widetilde{\bar{\mu}^n}(\upd e,\upd s)\\
&-\int_t^T\int_{\mathbb{R}}\left(|\delta Y_{s-}+\delta U_s(e)e|^p-|\delta Y_{s-}|^p\right)\widetilde{\mu^n}(\upd e,\upd s),\label{eq:mainproof2}
\end{align}
where we choose the constants $\alpha,\beta,\gamma,\varepsilon>0$ arbitrarily such that $\frac{Kp}{\beta}<\kappa_p$, $\frac{K^4p}{\gamma}<\kappa_p$ and $\frac{K^4p}{\varepsilon}<\kappa_p$.

Note that the local martingales in \eqref{eq:mainproof2} are indeed true martingales which follows from the proof of the existence of the SDEs, see \cite{Kruse2016}. We then take expectations of \eqref{eq:mainproof2} to obtain
\begin{align}
& \mathbb{E}\left[|\delta Y_t|^p+\kappa_p\int_t^T|\delta Y_s|^{p-2}\delta Z_s^2\upd s+\kappa_p\int_t^T|\delta Y_{s}|^{p-2}||U_s||_{\mathbb{L}_{\bar{\nu}^n}^2}^2\upd s+\kappa_p\int_t^T|\delta Y_{s}|^{p-2}||\delta U_s||_{\mathbb{L}_{\nu^n}^2}^2\upd s\right]\\
\le& C_p\left(\sigma^p(n)+\sigma^2(n)^{p/2}+C_p \mathbb{E}\left[\int_t^T |\delta Y_s|^{p} \upd s\right]\right).\label{eq:mainproof3}
\end{align}


\noindent Then Gronwall's lemma implies
\begin{align}
\mathbb{E}\left[|\delta Y_t|^p\right]\le C_p \left(\sigma^p(n)+\sigma^2(n)^{p/2}\right).\label{eq:mainproof4}
\end{align}
We substitute \eqref{eq:mainproof4} into \eqref{eq:mainproof3} to get
\begin{align}
\mathbb{E}\left[\int_0^T|\delta Y_s|^{p-2}\delta Z_s^2\upd s+ \int_0^T|\delta Y_{s}|^{p-2}||U_s||_{\mathbb{L}_{\bar{\nu}^n}^2}^2\upd s+\int_0^T|\delta Y_{s}|^{p-2}||\delta U_s||_{\mathbb{L}_{\nu^n}^2}^2\upd s\right]\le C_p \left(\sigma^p(n)+\sigma^2(n)^{p/2}\right),
\end{align}
which implies that
\begin{align}
&\mathbb{E}\left[\int_0^T|\delta Y_s|^p\upd s+\int_0^T|\delta Y_s|^{p-2}\delta Z_s^2\upd s+ \int_0^T|\delta Y_{s}|^{p-2}||U_s||_{\mathbb{L}_{\bar{\nu}^n}^2}^2\upd s+\int_0^T|\delta Y_{s}|^{p-2}||\delta U_s||_{\mathbb{L}_{\nu^n}^2}^2\upd s\right]\\
\le&\ C_p \left(\sigma^p(n)+\sigma^2(n)^{p/2}\right),
\end{align}

Now we apply the Burkholder-Davis-Gundy inequality and Young's inequality to the martingales in \eqref{eq:mainproof1}. First,
\begin{align}
&\mathbb{E}\left[\sup_{0\le t\le T}\left|\int_t^T \delta Y_{s-} |\delta Y_{s-}|^{p-2}\delta Z_s\upd B_s\right|\right] \le C_p \mathbb{E}\left[\left(\int_0^T|\delta Y_{s}|^{2p-2}|\delta Z_s|^2\upd s\right)^{1/2}\right]\\
\le&\ \frac{1}{4p}\mathbb{E}\left[\sup_{0\le t\le T}|\delta Y_t|^p\right]+pC_p^2 \mathbb{E}\left[\int_0^T|\delta Y_s|^{p-2}|\delta Z_s|^2\upd s\right].\label{eq:mainbdg1}
\end{align}
Second, 
\begin{align}
&\mathbb{E}\left[\sup_{0\le t\le T}\left|\int_t^T\int_{\mathbb{R}}\delta Y_{s-}|\delta Y_{s-}|^{p-2}U_s(e)e\widetilde{\bar{\mu}^n}(\upd e,\upd s) \right|\right] \le  C_p \mathbb{E}\left[\left(\int_0^T\int_{\mathbb{R}}|\delta Y_{s-}|^{2p-2}U_s(e)^2e^2\bar{\mu}^n(\upd e,\upd s)\right)^{1/2}\right]\\
\le &\ \frac{1}{4p}\mathbb{E}\left[\sup_{0\le t\le T}|\delta Y_t|^p\right]+pC_p^2\mathbb{E}\left[\int_0^T|\delta Y_s|^{p-2}||U_s||_{\mathbb{L}_{\bar{\nu}^n}^2}^2\upd s\right].\label{eq:mainbdg2}
\end{align}
Third,
\begin{align}
&\mathbb{E}\left[\sup_{0\le t\le T}\left|\int_t^T\int_{\mathbb{R}}\delta Y_{s-}|\delta Y_{s-}|^{p-2}\delta U_s(e)e\widetilde{\mu^n}(\upd e,\upd s)\right|\right]\le C_p \mathbb{E}\left[\left(\int_0^T\int_{\mathbb{R}}|\delta Y_{s-}|^{2p-2}\delta U_s(e)^2e^2\mu^n(\upd e,\upd s)\right)^{1/2}\right]\\
\le &\ \frac{1}{4p}\mathbb{E}\left[\sup_{0\le t\le T}|\delta Y_t|^p\right]+pC_p^2\mathbb{E}\left[\int_0^T|\delta Y_s|^{p-2}||\delta U_s||_{\mathbb{L}_{\nu^n}^2}^2\upd s\right].\label{eq:mainbdg3}
\end{align}

We return to \eqref{eq:mainproof1} and use the bound \eqref{eq:boundeta} for $\eta$ to get
\begin{align}
&|\delta Y_t|^p+\kappa_p\int_t^T|\delta Y_s|^{p-2}\delta Z_s^2\upd s+ \kappa_p\int_t^T|\delta Y_{s}|^{p-2}||U_s||_{\mathbb{L}_{\bar{\nu}^n}^2}^2\upd s+\kappa_p\int_t^T|\delta Y_{s}|^{p-2}||\delta U_s||_{\mathbb{L}_{\nu^n}^2}^2\upd s\\
\le &\ |\delta g(X_T)|^p+ \int_t^T p\delta Y_s|\delta Y_s|^{p-2}\delta f (\Theta_s)\upd s-p\int_t^T\delta Y_{s-}|\delta Y_{s-}|^{p-2}\delta Z_s\upd B_s\\
&-p\int_t^T\int_{\mathbb{R}}\delta Y_{s-}|\delta Y_{s-}|^{p-2}U_s(e)e\widetilde{\bar{\mu}^n}(\upd e,\upd s)-p\int_t^T\int_{\mathbb{R}}\delta Y_{s-}|\delta Y_{s-}|^{p-2}\delta U_s(e)e\widetilde{\mu^n}(\upd e,\upd s).
\end{align}
When we now follow the previous lines in the proof to bound the $\delta f (\Theta_s)$ integral and use the bounds \eqref{eq:mainbdg1}, \eqref{eq:mainbdg2} and \eqref{eq:mainbdg3} by the Burkholder-Davis-Gundy inequality we finally derive
\begin{align}
\mathbb{E}\left[\sup_{0\le t\le T} |Y_t|^p\right] \le C_p \left(\sigma^p(n)+\sigma^2(n)^{p/2}\right).
\end{align}

\noindent \textbf{Step 2:}
In the second step we prove that
\begin{align}
&\mathbb{E}\left[\left(\int_0^T|\delta Z_s|^2\upd s\right)^{p/2} + \left(\int_0^T\int_{\mathbb{R}}|U_s(e)e|^2\bar{\nu}^n(\upd e)\upd s\right)^{p/2} + \left(\int_0^T\int_{\mathbb{R}}|\delta U_s(e)e|^2\nu^n(\upd e)\upd s\right)^{p/2}\right]\\
\le&\ C_p \left(\sigma^p(n)+\sigma^2(n)^{p/2}\right).\label{eq:secondstep}
\end{align}

\noindent Again we apply Itô's formula, this time to $|\delta Y_t|^2$:
\begin{align}
&|\delta Y_0|^2+\int_0^{T}|\delta Z_s|^2\upd s + \int_0^{T}\int_{\mathbb{R}}|U_s(e)e|^2\bar{\mu}^n(\upd e,\upd s) + \int_0^{T}\int_{\mathbb{R}}|\delta U_s(e)e|^2\mu^n(\upd e,\upd s) \\
=&\ |\delta Y_{T}|^2+2 \int_0^{T}\delta Y_s\delta f (\Theta_s)\upd s -2 \int_0^{T}\delta Y_s \delta Z_s\upd B_s\\
&-2 \int_0^{T}\int_{\mathbb{R}}\delta Y_{s-} U_s(e)e \widetilde{\bar{\mu}^n}(\upd e,\upd s) - 2 \int_0^{T}\int_{\mathbb{R}}\delta Y_{s-}\delta U_s(e)e\widetilde{\mu^n}(\upd e,\upd s).
\end{align}

\noindent Next we use the Lipschitz condition \eqref{eq:LipschitzY} 
\begin{align}
&\int_0^{T}|\delta Z_s|^2\upd s + \int_0^{T}\int_{\mathbb{R}}|U_s(e)e|^2\bar{\mu}^n(\upd e,\upd s) + \int_0^{T}\int_{\mathbb{R}}|\delta U_s(e)e|^2\mu^n(\upd e,\upd s) \\
\le&\ |\delta Y_{*}|^2+2K \int_0^{T}|\delta Y_s|^2\upd s +2K \int_0^{T}\delta Y_s |\delta X_s|\upd s +2K \int_0^{T}\delta Y_s|\delta Z_s|\upd s \\
&+2K \int_0^{T}\delta Y_s\int_{\mathbb{R}}\rho(e)| U_s(e)| e\bar{\nu}^n(\upd e)\upd s + 2K \int_0^{T}\delta Y_s\int_{\mathbb{R}}\rho(e)|\delta U_s(e)|e\nu^n(\upd e) \upd s \\ 
&-2 \int_0^{T}\delta Y_s \delta Z_s\upd B_s - 2 \int_0^{T}\int_{\mathbb{R}}\delta Y_{s-} U_s(e)e \widetilde{\bar{\mu}^n}(\upd e,\upd s) - 2 \int_0^{T}\int_{\mathbb{R}}\delta Y_{s-}\delta U_s(e)e\widetilde{\mu^n}(\upd e,\upd s),
\end{align}
where $\delta Y_*:=\sup_{0\le t\le T} |\delta Y_t|$.

\noindent We again use the inequality $xy\le \alpha x^2+y^2/\alpha$ for $\alpha>0$, $x,y\ge0$ to get the bound
\begin{align}
&\int_0^{T}|\delta Z_s|^2\upd s + \int_0^{T}\int_{\mathbb{R}}|U_s(e)e|^2\bar{\mu}^n(\upd e,\upd s) + \int_0^{T}\int_{\mathbb{R}}|\delta U_s(e)e|^2\mu^n(\upd e,\upd s)\\
\le&\ |\delta Y_{*}|^2+2K(1+ \alpha+\beta+\gamma+\varepsilon) \int_0^{T}|\delta Y_s|^2\upd s +\frac{2K}{\alpha} \int_0^{T} |\delta X_s|^2\upd s +\frac{2K}{\beta} \int_0^{T}|\delta Z_s|^2\upd s \\
&+\frac{2K^4}{\gamma} \int_0^{T}|| U_s||_{\mathbb{L}_{\bar{\nu}^n}^2}^2\upd s +\frac{2K^4}{\varepsilon} \int_0^{T}||\delta U_s||_{\mathbb{L}_{\nu^n}^2}^2 \upd s
-2 \int_0^{T}\delta Y_s \delta Z_s\upd B_s\\
&-2 \int_0^{T}\int_{\mathbb{R}}\delta Y_{s-} U_s(e)e \widetilde{\bar{\mu}^n}(\upd e,\upd s) - 2 \int_0^{T}\int_{\mathbb{R}}\delta Y_{s-}\delta U_s(e)e\widetilde{\mu^n}(\upd e,\upd s).\label{eq:mainproof5}
\end{align}

\noindent Next we take powers of \eqref{eq:mainproof5} (and use Jensen's inequality for the first two integrals on the RHS)
\begin{align}
&\left(\int_0^{T}|\delta Z_s|^2\upd s\right)^{p/2} + \left(\int_0^{T}\int_{\mathbb{R}}|U_s(e)e|^2\bar{\mu}^n(\upd e,\upd s)\right)^{p/2} + \left(\int_0^{T}\int_{\mathbb{R}}|\delta U_s(e)e|^2\mu^n(\upd e,\upd s)\right)^{p/2}\\
\le&\ C_p|\delta Y_{*}|^p+C_p\left(2K(1+ \alpha+\beta+\gamma+\varepsilon)\right)^{p/2} \int_0^{T}|\delta Y_s|^p\upd s \\
&+C_p\left(\frac{2K}{\alpha}\right)^{p/2} \int_0^{T} |\delta X_s|^p\upd s  +C_p\left(\frac{2K}{\beta}\right)^{p/2} \left(\int_0^{T}|\delta Z_s|^2\upd s\right)^{p/2}\\
&+C_p\left(\frac{2K^4}{\gamma}\right)^{p/2} \left(\int_0^{T}|| U_s||_{\mathbb{L}_{\bar{\nu}^n}^2}^2\upd s\right)^{p/2} +C_p\left(\frac{2K^4}{\varepsilon}\right)^{p/2} \left(\int_0^{T}||\delta U_s||_{\mathbb{L}_{\nu^n}^2}^2 \upd s\right)^{p/2} \\
&+C_p\left( \left|\int_0^{T}\delta Y_s \delta Z_s\upd B_s\right|^{p/2}
+\left| \int_0^{T}\int_{\mathbb{R}}\delta Y_{s-} U_s(e)e \widetilde{\bar{\mu}^n}(\upd e,\upd s)\right|^{p/2} + \left|\int_0^{T}\int_{\mathbb{R}}\delta Y_{s-}\delta U_s(e)e\widetilde{\mu^n}(\upd e,\upd s)\right|^{p/2}\right).\label{eq:mainproof6}
\end{align}

\noindent Because $p/2\ge1$, we can apply the Burkholder-Davis-Gundy inequality and Young's inequality to get
\begin{align}
&C_p\mathbb{E}\left[\left|\int_0^{T}\delta Y_s \delta Z_s\upd B_s\right|^{p/2}\right]\le c_p \mathbb{E}\left[\left(\int_0^{T}|\delta Y_s|^2|\delta Z_s|^2\upd s\right)^{p/4}\right]\le \frac{c_p^2}{4}\mathbb{E}\left[|\delta Y_*|^p\right]+\frac{1}{2}\mathbb{E}\left[\left(\int_0^{T}|\delta Z_s|^2\upd s\right)^{p/2}\right],\\
&C_p \mathbb{E}\left[\left|\int_0^{T}\int_{\mathbb{R}}\delta Y_{s-} U_s(e)e\widetilde{\bar{\mu}^n}(\upd e,\upd s)\right|^{p/2}\right] \le c_p \mathbb{E}\left[\left(\int_0^{T}\int_{\mathbb{R}}|\delta Y_{s-}|^2|U_s(e)|^2e^2\bar{\mu}^n(\upd e,\upd s)\right)^{p/4}\right]\\
& \ \ \ \le \frac{c_p^2}{4}\mathbb{E}\left[|\delta Y_*|^p\right]+\frac{1}{2}\mathbb{E}\left[\left(\int_0^{T}||U_s||_{\mathbb{L}_{\bar{\nu}^n}^2}^2\bar{\mu}^n(\upd e,\upd s)\right)^{p/2}\right],\\
&C_p \mathbb{E}\left[\left|\int_0^{T}\int_{\mathbb{R}}\delta Y_{s-}\delta U_s(e)e\widetilde{\mu^n}(\upd e,\upd s)\right|^{p/2}\right] \le c_p \mathbb{E}\left[\left(\int_0^{T}\int_{\mathbb{R}}|\delta Y_{s-}|^2|\delta U_s(e)|^2e^2\mu^n(\upd e,\upd s)\right)^{p/4}\right]\\
& \ \ \ \le \frac{c_p^2}{4}\mathbb{E}\left[|\delta Y_*|^p\right]+\frac{1}{2}\mathbb{E}\left[\left(\int_0^{T}||\delta U_s||_{\mathbb{L}_{\nu^n}^2}^2\mu^n(\upd e,\upd s)\right)^{p/2}\right],
\end{align}
for some constant $c_p$. Using this for the expectation of \eqref{eq:mainproof6}, we see
\begin{align}
&\frac{1}{2}\mathbb{E}\left[\left(\int_0^{T}|\delta Z_s|^2\upd s\right)^{p/2}\right] +\frac{1}{2}\mathbb{E}\left[\left(\int_0^{T}\int_{\mathbb{R}}|U_s(e)e|^2\bar{\mu}^n(\upd e,\upd s)\right)^{p/2}\right]\\
&\ + \frac{1}{2}\mathbb{E}\left[ \left(\int_0^{T}\int_{\mathbb{R}}|\delta U_s(e)e|^2\mu^n(\upd e,\upd s)\right)^{p/2}\right]\\
\le &\ C_{p,K,T,\alpha,\beta,\gamma,\varepsilon}\mathbb{E}\left[|\delta Y_*|^p\right]+C_{p,K,T,\alpha}\mathbb{E}\left[\sup_{0\le t\le T}|\delta X_t|^p\right]+C_p\left(\frac{2K}{\beta}\right)^{p/2} \mathbb{E}\left[\left(\int_0^{T}|\delta Z_s|^2\upd s\right)^{p/2}\right]\\
&+C_p\left(\frac{2K^4}{\gamma}\right)^{p/2}\mathbb{E}\left[\left(\int_0^{T}|| U_s||_{\mathbb{L}_{\bar{\nu}^n}^2}^2\upd s\right)^{p/2}\right]  +C_p\left(\frac{2K^4}{\varepsilon}\right)^{p/2}\mathbb{E}\left[\left(\int_0^{T}||\delta U_s||_{\mathbb{L}_{\nu^n}^2}^2 \upd s\right)^{p/2}\right].
\end{align}

\noindent As in \cite{Kruse2016} \cite[see also][]{Dzhaparidze1990} we use the bounds
\begin{align}
\mathbb{E}\left[\left(\int_0^{T}|| U_s||_{\mathbb{L}_{\bar{\nu}^n}^2}^2\upd s\right)^{p/2}\right]&\le d_p\mathbb{E}\left[\left(\int_0^{T}\int_{\mathbb{R}}|U_s(e)e|^2\bar{\mu}^n(\upd e,\upd s)\right)^{p/2}\right],\\
\mathbb{E}\left[\left(\int_0^{T}||\delta U_s||_{\mathbb{L}_{\nu^n}^2}^2 \upd s\right)^{p/2}\right]&\le d_p\mathbb{E}\left[ \left(\int_0^{T}\int_{\mathbb{R}}|\delta U_s(e)e|^2\mu^n(\upd e,\upd s)\right)^{p/2}\right],
\end{align}
for some constant $d_p>0$.

\noindent All in all we can choose the constants $\alpha$, $\beta$, $\gamma$ and $\varepsilon$ (only depending on $p$) such that
\begin{align}
&\mathbb{E}\left[\left(\int_0^{T}|\delta Z_s|^2\upd s\right)^{p/2}\right] + \mathbb{E}\left[\left(\int_0^{T}|| U_s||_{\mathbb{L}_{\bar{\nu}^n}^2}^2\upd s\right)^{p/2}\right] + \mathbb{E}\left[\left(\int_0^{T}||\delta U_s||_{\mathbb{L}_{\nu^n}^2}^2 \upd s\right)^{p/2}\right]\\
\le&\ C_{p}\mathbb{E}\left[|\delta Y_*|^p\right]+C_{p}\mathbb{E}\left[\sup_{0\le t\le T}|\delta X_t|^p\right]\\
\le&\ C_p \left(\sigma^p(n)+\sigma^2(n)^{p/2}\right),
\end{align}
by Step 1 and Proposition \ref{prop:approxX}.
\end{proof}

\section{Error of the discretization of the FBSDE}\label{sec:discrete}
In this section we discretize the approximated FBSDE $(X^n,Y^n,Z^n,U^n)$ and derive error bounds. We use a forward-backward Euler scheme for simulation.
First we define the regular grid $\tilde\pi:=\left\{\tilde t_k:=\frac{kT}{N},\ k=0,\ldots,N\right\}$ on $[0,T]$. 
We do not discuss the discretization of the original FBSDE because in practice they cannot be simulated and the proofs of this section rely on $\nu^n(\mathbb{R})<\infty$.
Starting with the forward Euler scheme for $X^n$, we define
\begin{align}\label{eq:EulerSchemeX}
\begin{cases}
X_0^{n,\pi}&:= X_0\\
X_{\tilde t_{k+1}}^{n,\pi}&:=X_{\tilde t_{k}}^{n,\pi}+\frac{T}{N}b(\tilde t_k,X_{t_{k}}^{n,\pi})+a(\tilde t_k,X_{t_{k}}^{n,\pi})\Delta B_{{k+1}}+\int_{\mathbb{R}}h(\tilde t_k,X_{\tilde t_{k}}^{n,\pi})e\widetilde{\mu^n}(\upd e, (\tilde t_k,\tilde t_{k+1}]),
\end{cases}
\end{align}
where $\Delta B_{{k+1}}:=B_{\tilde t_{k+1}}-B_{\tilde t_k}$ are normal random variables.
We now aim to to derive the discretization error of the forward SDE. Although the techniques are pretty standard, we have to reconsider the result of \cite{Aazizi2013} because the discretization error depends on the truncation parameter $n$. We here follow \cite{Mrad2023} who derived a new approximation-discretization error for a jump-adapted scheme. In order to do so, we define the jump-adapted discretization scheme as
\begin{equation}
\pi:=\{ t_k,k=1,\ldots, N+ J^n\}:=\tilde\pi \cup \{T_i,i:G_i\le nT\},
\end{equation}
which is the superposition of the discretization times and the times when large jumps occur, and $J^n:=|\{T_i,i:G_i\le nT\}|$ denotes the number of large jumps. 

Let us define the function $\tau$ for $t\in[0,T]$ by
\begin{equation}\label{eq:psi}
\tau_t:=\max\{t_k,\ k=1,\ldots,N+J^n|t_k\le t\},
\end{equation}
and let
\begin{equation}\label{eq:EulerSchemeXcont}
X_{t}^{n,\pi}:=X_{\tau_t}^{n,\pi}+b(\tau_t,X_{\tau_t}^{n,\pi})(t-\tau_t)+a(\tau_t,X_{\tau_t}^{n,\pi})(B_t-B_{\tau_t})+\int_{\mathbb{R}}h(\tau_t,X_{\tau_t}^{n,\pi})e\widetilde{\mu^n}(\upd e, (\tau_t,t])
\end{equation}
be the jump-adapted Euler-scheme for $t\in(t_k,t_{k+1}]$.
This can be written as
\begin{equation}\label{eq:EulerSchemeXint}
X_{t}^{n,\pi}=X_{0}+\int_0^tb(\tau_s,X_{\tau_s}^{n,\pi})\upd s+\int_0^ta(\tau_s,X_{\tau_s}^{n,\pi})\upd B_s+\int_0^t\int_{\mathbb{R}}h(\tau_s,X_{\tau_s}^{n,\pi})e\widetilde{\mu^n}(\upd e, \upd s)
\end{equation}
for $t\in[0,T]$.
We now state estimates for the discretization error of the forward SDE. The following bound is derived in Lemma 1 in \cite{Mrad2023}. Let us define
\begin{equation}
\label{eq:skriptm}
\mathfrak{m}^p(n):=\int_{\mathbb{R}}|e|^p\nu^n(\upd e).
\end{equation}
Under Assumption \ref{ass:Lipschitz}.(i) on $(\Omega,\mathcal{F},(\mathcal{F}_t),\mathbb{P})$ there exists a constant $C_p$ such that
\begin{equation}
\mathbb{E}\left[\sup_{\tau_u\le u\le t}|X_u^{n,\pi}-X_{\tau_u}^{n,\pi}|^p\right]\le C_p \left(N^{-p/2}+N^{-p}\mathfrak{m}^1(n)^{p}\right),
\end{equation}
for any $t\in[0,T]$. Then the following lemma follows analogously to Theorem 6 in \cite{Mrad2023}.
\begin{lem}\label{lem:EulerErrX}
Let $p\ge2$. Under Assumption \ref{ass:Lipschitz}.(i) on $(\Omega,\mathcal{F},(\mathcal{F}_t),\mathbb{P})$ there exists a constant $C_p$ such that the Euler scheme \eqref{eq:EulerSchemeX} of the forward SDE has the discretization error
\begin{equation}\label{eq:forwardEulerError}
\mathbb{E}\left[\sup_{t\in[0,T]}|X_t^n-X_t^{n,\pi}|^p\right]\le C_p \left(N^{-p/2}+N^{-p}\mathfrak{m}^1(n)^{p}\right),
\end{equation}
for all $p\ge2$.
\end{lem}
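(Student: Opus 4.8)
The plan is to mirror the proof of Proposition~\ref{prop:approxX}: subtract the two integral representations of $X^n$ and $X^{n,\pi}$, apply the Burkholder--Davis--Gundy inequality together with the Lipschitz/Hölder bound \eqref{eq:LipschitzX}, and close the estimate with Gronwall's lemma. The one-step bound of Lemma~1 of \cite{Mrad2023} stated just above will supply the $N^{-p/2}+N^{-p}\mathfrak{m}^1(n)^p$ terms.

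First I would set $\delta X_t := X_t^n - X_t^{n,\pi}$. Subtracting \eqref{eq:EulerSchemeXint} from \eqref{eq:nXSDEv3}, the process $\delta X_t$ is the sum of a drift integral $\int_0^t[b(s,X_s^n)-b(\tau_s,X_{\tau_s}^{n,\pi})]\upd s$, a Brownian integral $\int_0^t[a(s,X_s^n)-a(\tau_s,X_{\tau_s}^{n,\pi})]\upd B_s$, and a compensated jump integral $\int_0^t\int_{\mathbb{R}}[h(s,X_{s-}^n)-h(\tau_s,X_{\tau_s}^{n,\pi})]e\,\widetilde{\mu^n}(\upd e,\upd s)$. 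Taking $\mathbb{E}[\sup_{0\le s\le t}|\cdot|^p]$ and applying the Burkholder--Davis--Gundy inequality (Theorem~2.11 of \cite{Kunita2004}) exactly as in \eqref{eq:sideproof1} produces a drift term, a diffusion $p/2$-type term, and two jump terms (one $p/2$-type and one $p$-type against $\nu^n$).

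Next I would estimate each coefficient difference by \eqref{eq:LipschitzX}, e.g. $|b(s,X_s^n)-b(\tau_s,X_{\tau_s}^{n,\pi})|\le K(|s-\tau_s|^{1/2}+|\delta X_s^{\tau}|)$ with $\delta X_s^{\tau}:=X_s^n-X_{\tau_s}^{n,\pi}$, and likewise for $a$ and $h$. Since $h$ does not depend on $e$, the $e$-integrals in the two jump terms factor off as $\mathfrak{m}^2(n)=\int_{\mathbb{R}}e^2\nu^n(\upd e)$ and $\mathfrak{m}^p(n)=\int_{\mathbb{R}}|e|^p\nu^n(\upd e)$; because $\nu^n\le\nu$ these are bounded by $\int_{\mathbb{R}}e^2\nu(\upd e)$ and $\int_{\mathbb{R}}|e|^p\nu(\upd e)$, both finite by \eqref{eq:measureass}, and hence absorbed into $C_p$. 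Using Jensen's inequality for the time integral (as in Proposition~\ref{prop:approxX}) turns every $p/2$-power into a plain $\upd s$-integral of $|\cdot|^p$. The jump-adapted grid $\pi$ refines the regular grid $\tilde\pi$, so its mesh is at most $T/N$ and $|s-\tau_s|\le T/N$; the Hölder-in-time contributions therefore produce $N^{-p/2}$.

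Finally I would invoke the split $|\delta X_s^{\tau}|^p\le C_p(|\delta X_s|^p+|X_s^{n,\pi}-X_{\tau_s}^{n,\pi}|^p)$ and bound the expectation of the second summand by Lemma~1 of \cite{Mrad2023}, which supplies exactly $C_p(N^{-p/2}+N^{-p}\mathfrak{m}^1(n)^p)$. Writing $\phi(t):=\mathbb{E}[\sup_{0\le s\le t}|\delta X_s|^p]$ and collecting, the estimate reduces to $\phi(t)\le C_p(N^{-p/2}+N^{-p}\mathfrak{m}^1(n)^p+\int_0^t\phi(s)\,\upd s)$, and Gronwall's lemma yields \eqref{eq:forwardEulerError}. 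The argument is essentially routine; the points requiring care are the bookkeeping of the jump integral through BDG, verifying that the moments $\mathfrak{m}^2(n)$ and $\mathfrak{m}^p(n)$ stay uniformly bounded in $n$ (which is what keeps the bound clean, in contrast to the $\mathfrak{m}^1(n)$ factor carried over from the one-step estimate), and the mesh observation $|s-\tau_s|\le T/N$ that lets the jump-adapted scheme attain the $N^{-p/2}$ rate.
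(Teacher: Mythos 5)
Your proposal is correct and follows essentially the same route the paper takes (the paper simply defers to Lemma 1 and Theorem 6 of Mrad (2023), whose argument is exactly the chain you describe: subtract the integral representations, apply Burkholder--Davis--Gundy and the Lipschitz/H\"older bound, split $X_s^n-X_{\tau_s}^{n,\pi}$ into $\delta X_s$ plus the one-step increment $X_s^{n,\pi}-X_{\tau_s}^{n,\pi}$, insert the stated one-step bound, and close with Gronwall). Your two supporting observations --- that $\mathfrak{m}^2(n)$ and $\mathfrak{m}^p(n)$ are uniformly bounded in $n$ by $\int_{\mathbb{R}}(1\wedge|e|^2)\nu(\upd e)\le K$ together with \eqref{eq:measureass}, and that the jump-adapted grid $\pi\supseteq\tilde\pi$ still has mesh at most $T/N$ --- are precisely the points that make the bound uniform in $n$ apart from the $N^{-p}\mathfrak{m}^1(n)^p$ term inherited from the one-step estimate.
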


Next we introduce the backward implicit scheme to approximate $(Y^n,Z^n,\Gamma^n)$. We follow \cite{Bouchard2008} and \cite{Elie2006} and define
\begin{align}\label{eq:EulerSchemeY}
\begin{cases}
\bar{Z}_t^{n,\pi}&:=(t_{k+1}-t_k)^{-1}\mathbb{E}\left[\bar{Y}_{t_{k+1}}^{n,\pi}(B_{t_{k+1}}-B_{t_k})|\mathcal{F}_{t_k}\right]\\
\bar{\Gamma}_t^{n,\pi}&:=(t_{k+1}-t_k)^{-1}\mathbb{E}\left[\bar{Y}_{t_{k+1}}^{n,\pi}\int_{\mathbb{R}}\rho(e)e\widetilde{\mu^n}(\upd e,(t_k,t_{k+1}])|\mathcal{F}_{t_k}\right]\\
\bar{Y}_t^{n,\pi}&:=\mathbb{E}\left[\bar{Y}_{t_{k+1}}^{n,\pi}|\mathcal{F}_{t_k}\right]+(t_{k+1}-t_k)f\left(t_k,X_{t_k}^{n,\pi},\bar{Y}_{t_k}^{n,\pi},\bar{Z}_{t_k}^{n,\pi},\bar{\Gamma}_{t_k}^{n,\pi}\right),
\end{cases}
\end{align}
on each interval $[t_k,t_{k+1})$, where $Y_{T}^{n,\pi}:=g(X_{T}^{n,\pi})$. If $f$ depends on $Y^n$, the last step of \eqref{eq:EulerSchemeY} requires a fixed point procedure. However, since $f$ is Lipschitz continuous in the $y$ variable and because $f$ is multiplied by a value close to $1/(N+J^n)$ the approximation error can be neglected for large values of $N$ and $n$.

Given the backward scheme \eqref{eq:EulerSchemeY}, we will analyze the discretization error
\begin{equation}
Err_{\pi,p}(Y^n,Z^n,U^n):=\left(\sup_{0\le t\le T} \mathbb{E}\left[|Y_t^n-\bar{Y}_t^{n,\pi}|^p\right]+||Z^n-\bar{Z}^{n,\pi}||_{\mathbb{H}^p}^p+||\Gamma^n-\bar{\Gamma}^{n,\pi}||_{\mathbb{H}^p}^p\right)^{1/p}
\end{equation}
and we will show that it converges to zero with order $N^{-1/2}$.

In the following we discuss some related processes which will be needed throughout the proofs. By the representation theorem, see, e.g., Lemma 2.3 \cite{Tang1994}, there exist two processes $Z^{n,\pi}\in\mathbb{H}^p$ and $U^{n,\pi}\in\mathbb{L}_{\mu^n}^p$ such that
\begin{equation}
\bar{Y}_{t_{k+1}}^{n,\pi}-\mathbb{E}\left[\bar{Y}_{t_{k+1}}^{n,\pi}|\mathcal{F}_{t_k}\right]=\int_{t_k}^{t_{k+1}}Z_s^{n,\pi}\upd B_s+\int_{t_k}^{t_{k+1}}\int_{\mathbb{R}}U_s^{n,\pi}(e)e\bar{\mu}^n(\upd e,\upd s).
\end{equation}
Observe that $\bar{Z}_t^{n,\pi}$ and $\bar{\Gamma}_t^{n,\pi}$ in \eqref{eq:EulerSchemeY} satisfy
\begin{align}
\bar{Z}_{t_k}^{n,\pi}&=(t_{k+1}-t_k)^{-1}\mathbb{E}\left[\left.\int_{t_k}^{t_{k+1}}Z_s^{n,\pi}\upd s\right|\mathcal{F}_{t_k}\right],\label{eq:equalityZnpi}\\
\bar{\Gamma}_{t_k}^{n,\pi}&=(t_{k+1}-t_k)^{-1}\mathbb{E}\left[\left.\int_{t_k}^{t_{k+1}}\Gamma_s^{n,\pi}\upd s\right|\mathcal{F}_{t_k}\right],
\end{align}
and thus coincide with the best $\mathbb{H}_{[t_k,t_{k+1}]}^2$-approximations of the processes $(Z_t^{n,\pi})$ and $(\Gamma_t^{n,\pi}):=$ \\$\left(\int_{\mathbb{R}}\rho(e)U_t^{n,\pi}(e)e\nu^n(\upd e)\right)$ on $[t_k,t_{k+1})$ by $\mathcal{F}_{t_k}$-measurable random variables (viewed as constant processes on $[t_k,t_{k+1})$), i.e.,
\begin{align}
\mathbb{E}\left[\int_{t_k}^{t_{k+1}}|Z_s^{n,\pi}-\bar{Z}_{t_k}^{n,\pi}|^2\upd s\right]=\inf_{Z_k\in L^2(\Omega,\mathcal{F}_{t_k})}\mathbb{E}\left[\int_{t_k}^{t_{k+1}}|Z_s^{n,\pi}-Z_k|^2\upd s\right],\\
\mathbb{E}\left[\int_{t_k}^{t_{k+1}}|\Gamma_s^{n,\pi}-\bar{\Gamma}_{t_k}^{n,\pi}|^2\upd s\right]=\inf_{\Gamma_k\in L^2(\Omega,\mathcal{F}_{t_k})}\mathbb{E}\left[\int_{t_k}^{t_{k+1}}|\Gamma_s^{n,\pi}-\Gamma_k|^2\upd s\right].
\end{align}
Thus, it holds that
\begin{equation}
\bar{Y}_{t_{k}}^{n,\pi}=\bar{Y}_{t_{k+1}}^{n,\pi}+(t_{k+1}-t_k)f(t_k,X_{t_k}^{n,\pi},\bar{Y}_{t_{k}}^{n,\pi},\bar{Z}_{t_k}^{n,\pi},\bar{\Gamma}_{t_k}^{n,\pi})-\int_{t_k}^{t_{k+1}}Z_s^{n,\pi}\upd B_s-\int_{t_k}^{t_{k+1}}\int_{\mathbb{R}}U_s^{n,\pi}(e)e\bar{\mu}^n(\upd e,\upd s).
\end{equation}
We define the process $Y^{n,\pi}$
\begin{equation}
Y_{t}^{n,\pi}:=\bar{Y}_{t_{k}}^{n,\pi}-(t-t_k)f(t_k,X_{t_k}^{n,\pi},\bar{Y}_{t_{k}}^{n,\pi},\bar{Z}_{t_k}^{n,\pi},\bar{\Gamma}_{t_k}^{n,\pi})+\int_{t_k}^{t}Z_s^{n,\pi}\upd B_s+\int_{t_k}^{t}\int_{\mathbb{R}}U_s^{n,\pi}(e)e\bar{\mu}^n(\upd e,\upd s)
\end{equation}
on $[t_k,t_{k+1})$ and obtain that
\begin{equation}
(t_{k+1}-t_k)^{-1}\mathbb{E}\left[\int_{t_k}^{t_{k+1}}Y_s^{n,\pi}\upd s|\mathcal{F}_{t_k}\right]=\mathbb{E}\left[\bar{Y}_{t_{k+1}}^{n,\pi}|\mathcal{F}_{t_k}\right]+(t_{k+1}-t_k)f(t_k,X_{t_k}^{n,\pi},\bar{Y}_{t_{k}}^{n,\pi},\bar{Z}_{t_k}^{n,\pi},\bar{\Gamma}_{t_k}^{n,\pi})=Y_{t_{k}}^{n,\pi}=\bar{Y}_{t_{k}}^{n,\pi}.\label{eq:YtkbarYtk}
\end{equation}
Thus $\bar{Y}_{t_{k}}^{n,\pi}$ is the best approximation of $Y^{n,\pi}$ on $[t_k,t_{k+1})$ by $\mathcal{F}_{t_k}$-measurable random variables (viewed as constant processes on $[t_k,t_{k+1})$), which explains the notation $\bar{Y}^{n,\pi}$, consistent with the definition of $\bar{Z}^{n,\pi}$ and $\bar{\Gamma}^{n,\pi}$.

Furthermore, we need to define the processes $(\bar{Z}^n,\bar{\Gamma}^n)$ on each interval $[t_k,t_{k+1})$ by
\begin{align}
\bar{Z}_t^n&:=(t_{k+1}-t_k)\mathbb{E}\left[\left.\int_{t_k}^{t_{k+1}}Z_s^n\upd s\right|\mathcal{F}_{t_k}\right],\label{eq:defZnbar}\\
\bar{\Gamma}_t^n&:=(t_{k+1}-t_k)\mathbb{E}\left[\left.\int_{t_k}^{t_{k+1}}\Gamma_s^n\upd s\right|\mathcal{F}_{t_k}\right].
\end{align}

\begin{rem}
$\bar{Z}_{t_k}^n$ and $\bar{\Gamma}_{t_k}^n$ are the counterparts of $\bar{Z}_{t_k}^{n,\pi}$ and $\bar{\Gamma}_{t_k}^{n,\pi}$ for the original backward SDE. They can be interpreted as the best $\mathbb{H}_{[t_k,t_{k+1}]}^2$-approximations of $(Z_t^n)_{t_k\le t<t_{k+1}}$ and $(\Gamma_t^n)_{t_k\le t<t_{k+1}}$ by an $\mathcal{F}_{t_k}$-measurable random variables (viewed as constant processes on $[t_k,t_{k+1})$), i.e.,
\begin{align}
\mathbb{E}\left[\int_{t_k}^{t_{k+1}}|Z_s^{n}-\bar{Z}_{t_k}^{n}|^2\upd s\right]=\inf_{Z_k\in L^2(\Omega,\mathcal{F}_{t_k})}\mathbb{E}\left[\int_{t_k}^{t_{k+1}}|Z_s^{n}-Z_k|^2\upd s\right],\\
\mathbb{E}\left[\int_{t_k}^{t_{k+1}}|\Gamma_s^{n}-\bar{\Gamma}_{t_k}^{n}|^2\upd s\right]=\inf_{\Gamma_k\in L^2(\Omega,\mathcal{F}_{t_k})}\mathbb{E}\left[\int_{t_k}^{t_{k+1}}|\Gamma_s^{n}-\Gamma_k|^2\upd s\right].
\end{align}
\end{rem}

We now state our second main theorem, which gives a bound for the discretization error.
\begin{thm}\label{thm:EulerErr}
Under Assumptions \ref{ass:Lipschitz} and \ref{ass:invofh}, the discretization error for the backward SDE is bounded by
\begin{equation}
\label{eq:EulerErr}
Err_{\pi,p}(Y^n,Z^n,U^n)\le C_p \left(N^{-1/2}+N^{-1}\mathfrak{m}^1(n)\right),
\end{equation}
where $C_p$ only depends on constants and not on $n$ or $\pi$.
\end{thm}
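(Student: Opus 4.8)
The plan is to lift the continuous-time $L^p$ stability argument of Theorem~\ref{thm:approxY} to the discrete level. Concretely, I would compare the approximate solution $(Y^n,Z^n,U^n)$ of \eqref{eq:nYSDEv3} with the continuous interpolation $(Y^{n,\pi},Z^{n,\pi},U^{n,\pi})$ of the implicit scheme, and only at the end transfer the resulting bounds to $(\bar Y^{n,\pi},\bar Z^{n,\pi},\bar\Gamma^{n,\pi})$ through the $\mathbb H^2$-projection identities \eqref{eq:equalityZnpi}, \eqref{eq:defZnbar} and \eqref{eq:YtkbarYtk}. Since the system is decoupled, the forward part is settled first: Lemma~\ref{lem:EulerErrX} already controls $\mathbb E[\sup_{t}|X_t^n-X_t^{n,\pi}|^p]$ by $C_p(N^{-p/2}+N^{-p}\mathfrak m^1(n)^p)$, which after taking $p$-th roots is precisely a source term of the claimed order $N^{-1/2}+N^{-1}\mathfrak m^1(n)$; it feeds into every subsequent estimate through the Lipschitz dependence \eqref{eq:LipschitzY} of $f$ and $g$ on $X$.

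The technical heart, and the step I expect to be the main obstacle, is the $L^p$ path regularity of the control processes. I would introduce the projection residuals
\begin{equation}
\mathcal R_Z^n:=\mathbb E\left[\left(\int_0^T|Z_s^n-\bar Z_{\tau_s}^n|^2\,\upd s\right)^{p/2}\right],\qquad \mathcal R_\Gamma^n:=\mathbb E\left[\left(\int_0^T|\Gamma_s^n-\bar\Gamma_{\tau_s}^n|^2\,\upd s\right)^{p/2}\right],
\end{equation}
where $\bar Z_{\tau_s}^n,\bar\Gamma_{\tau_s}^n$ are the piecewise-constant processes built from \eqref{eq:defZnbar} via $\tau_s$ of \eqref{eq:psi}, and show $\mathcal R_Z^n+\mathcal R_\Gamma^n\le C_pN^{-p/2}$. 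For the Brownian control $Z^n$ this follows from the Lipschitz regularity of $(b,a,h,f,g)$, the flow estimate \eqref{eq:standardestimate}, and the $\tfrac12$-Hölder continuity in time. For the jump-driven quantity $\Gamma^n$ this is where Assumption~\ref{ass:invofh} becomes indispensable: the requirement that $\ell(t,x;e)=h_x(t,x)e+1$ stay bounded away from zero makes the jump map $x\mapsto x+h(t,x)e$ invertible, which yields a tractable representation of $Z^n$ and $U^n$ in terms of the first-variation process of the forward flow and hence the Hölder-type regularity of $\Gamma^n$ in $L^p$.

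With the regularity in hand, I would close the argument by a one-step analysis followed by a discrete Gronwall iteration. Taking $\mathcal F_{t_k}$-conditional expectations in \eqref{eq:nYSDEv3} on each $[t_k,t_{k+1}]$ and subtracting \eqref{eq:EulerSchemeY} gives
\begin{equation}
Y_{t_k}^n-\bar Y_{t_k}^{n,\pi}=\mathbb E\left[Y_{t_{k+1}}^n-\bar Y_{t_{k+1}}^{n,\pi}|\mathcal F_{t_k}\right]+\mathbb E\left[\int_{t_k}^{t_{k+1}}\left(f(\Theta_s^n)-f(t_k,X_{t_k}^{n,\pi},\bar Y_{t_k}^{n,\pi},\bar Z_{t_k}^{n,\pi},\bar\Gamma_{t_k}^{n,\pi})\right)\upd s|\mathcal F_{t_k}\right].
\end{equation}
Invoking the Lipschitz bound \eqref{eq:LipschitzY}, the $\tfrac12$-Hölder continuity in time (which produces the leading $N^{-1/2}$), the forward error of Lemma~\ref{lem:EulerErrX}, the residuals $\mathcal R_Z^n,\mathcal R_\Gamma^n$, and the inequality $xy\le\alpha x^2+y^2/\alpha$ exactly as in Theorem~\ref{thm:approxY}, I obtain a recursion $u_k\le(1+C_p/N)\,u_{k+1}+C_p\,\epsilon_k$ for $u_k:=\mathbb E[|Y_{t_k}^n-\bar Y_{t_k}^{n,\pi}|^p]$ with aggregated local error $\sum_k\epsilon_k\le C_p(N^{-p/2}+N^{-p}\mathfrak m^1(n)^p)$, so discrete Gronwall controls the $Y$-part uniformly in $k$. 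Finally the $Z$- and $\Gamma$-errors are recovered by the triangle inequality $\|Z^n-\bar Z^{n,\pi}\|_{\mathbb H^p}^p\le C_p(\mathcal R_Z^n+\|\bar Z^n-\bar Z^{n,\pi}\|_{\mathbb H^p}^p)$ (and analogously for $\Gamma$), where the second term is bounded by the difference of the representation processes $Z^n-Z^{n,\pi}$, itself estimated through an $L^p$ energy identity from Itô's formula applied to $|Y^n-Y^{n,\pi}|^2$ in terms of the already-controlled supremum error in $Y$. Collecting all source terms and taking $p$-th roots yields \eqref{eq:EulerErr}.
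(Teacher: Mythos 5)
Your overall architecture matches the paper's: forward error from Lemma \ref{lem:EulerErrX}, a one-step recursion for $\mathbb{E}[|Y^n_{t_k}-\bar Y^{n,\pi}_{t_k}|^p]$ closed by discrete Gronwall, recovery of the $Z$- and $\Gamma$-errors from an energy identity for $|Y^n-Y^{n,\pi}|^2$ raised to the power $p/2$, and a separate bound $C_pN^{-p/2}$ on the aggregated path-regularity residuals (the paper's $\bar B$). However, there is a genuine error in the step you yourself identify as the technical heart: you have the role of Assumption \ref{ass:invofh} exactly backwards. It is the Brownian control $Z^n$, not $\Gamma^n$, whose $L^p$ path regularity $\|Z^n-\bar Z^n\|_{\mathbb{H}^p}^p\le C_pN^{-p/2}$ requires Assumption \ref{ass:invofh}; the regularities of $Y^n$ and $\Gamma^n$ hold without it (see Remark \ref{rem:Ass2}, and without the assumption one only obtains $C_{p,\varepsilon}N^{-p/2+\varepsilon}$ for $Z^n$). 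Consequently your claim that $\mathcal R_Z^n\le C_pN^{-p/2}$ ``follows from the Lipschitz regularity of $(b,a,h,f,g)$, the flow estimate \eqref{eq:standardestimate}, and the $\tfrac12$-H\"older continuity in time'' is false as stated: this is precisely the deep $L^p$ analogue of Zhang's path-regularity theorem (Propositions 4.5--4.6 of Bouchard--Elie in the paper's Step 4), which rests on representing $Z^n$ through the first-variation process of the forward flow, and that representation needs the invertibility of $x\mapsto x+h(t,x)e$ guaranteed by $\ell(t,x;e)=h_x(t,x)e+1$ being bounded away from zero. As written, the hardest part of the proof is asserted to be elementary, and the assumption is spent on the wrong term.

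A second, more repairable issue: for general $p\ge2$ the one-step recursion cannot be closed by conditional expectations and Jensen alone. After splitting $|Z_s^n-\bar Z_{t_k}^{n,\pi}|\le|Z_s^n-\bar Z_{t_k}^{n}|+|\bar Z_{t_k}^{n}-\bar Z_{t_k}^{n,\pi}|$ (and similarly for $\Gamma$), the terms $|\bar Z_{t_k}^{n}-\bar Z_{t_k}^{n,\pi}|$ and $|\bar\Gamma_{t_k}^{n}-\bar\Gamma_{t_k}^{n,\pi}|$ must be absorbed into quadratic quantities $\int|\delta^nY_s|^{p-2}|\delta^nZ_s|^2\upd s$ and $\int\int|\delta^nY_s|^{p-2}|\delta^nU_s(e)e|^2\nu^n(\upd e)\upd s$ sitting on the left-hand side; these are produced only by applying It\^o's formula to $|\delta^nY_t|^p$ together with the Taylor-expansion lower bound \eqref{eq:boundeta} (the constant $\kappa_p=p(p-1)3^{1-p}$), exactly as in Theorem \ref{thm:approxY}. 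Your sketch gestures at ``$xy\le\alpha x^2+y^2/\alpha$ exactly as in Theorem \ref{thm:approxY}'' but the recursion you write down, obtained by conditioning the BSDE, does not carry these absorbing terms, so the choice of $\beta,\gamma$ that makes the argument close is not available. With the It\^o-formula route substituted for the conditioning route, and the regularity step correctly attributed to Assumption \ref{ass:invofh} via the cited results, your plan coincides with the paper's proof.
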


\begin{proof}
The proof is an $L^p$ extension of the proofs of \cite{Bouchard2008}, \cite{Elie2006} and \cite{Bouchard2004}. For the sake of brevity we set $\delta^nY_t:=Y_t^n-Y_t^{n,\pi}$, $\delta^nZ_t:=Z_t^n-Z_t^{n,\pi}$, $\delta^nU_t(e):=U_t^n(e)-U_t^{n,\pi}(e)$, $\delta^n\Gamma_t:=\Gamma_t^{n}-\Gamma_t^{n,\pi}$ and $\delta^nf(\Theta_t):=f(t,X_t^n,Y_t^n,Z_t^n,\Gamma_t^n)-f(t_k,X_{t_k}^{n,\pi},\bar{Y}_{t_k}^{n,\pi},\bar{Z}_{t_k}^{n,\pi},\bar{\Gamma}_{t_k}^{n,\pi})$. Note that $\bar{Y}_{t_k}^{n,\pi}=Y_{t_k}^{n,\pi}$ by \eqref{eq:YtkbarYtk} which we will use repeatedly.

The proof is divided in four steps. Before turning to the first step, we discuss some bounds which we will need throughout. Let $s\in[t_k,t_{k+1})$. Then
\begin{equation}\label{eq:bdXdiscr}
\mathbb{E}\left[|X_s^n-X_{t_k}^{n,\pi}|\right]\le C_p \left(N^{-p/2}+N^{-p}\mathfrak{m}^1(n)^p\right),
\end{equation}
by \eqref{eq:forwardEulerError}. Moreover,
\begin{equation}
\mathbb{E}\left[|Y_s^n-\bar{Y}_{t_k}^{n,\pi}|^p\right]\le p\left(\mathbb{E}\left[|Y_s^n-Y_{t_k}^n|^p\right]+\mathbb{E}\left[|\delta^nY_{t_k}|^p\right]\right)
\end{equation}
and
\begin{align}
\mathbb{E}\left[|Z_s^n-\bar{Z}_{t_k}^{n,\pi}|^p\right]&\le p\left(\mathbb{E}\left[|Z_s^{n}-\bar{Z}_{t_k}^{n}|^p\right]+\mathbb{E}\left[|\bar{Z}_{t_k}^n-\bar{Z}_{t_k}^{n,\pi}|^p\right]\right)\\
&= p\left(\mathbb{E}\left[|Z_s^{n}-\bar{Z}_{t_k}^{n}|^p\right]+\mathbb{E}\left[\left|(t_{k+1}-t_k)^{-1}\mathbb{E}\left[\left.\int_{t_k}^{t_{k+1}}\delta^nZ_s\upd s\right|\mathcal{F}_{t_k}\right]\right|^p\right]\right)\\
&\le C_p\left(\mathbb{E}\left[|Z_s^{n}-\bar{Z}_{t_k}^{n}|^p\right]+ \mathbb{E}\left[\mathbb{E}\left[\left.(t_{k+1}-t_k)^{-1}\int_{t_k}^{t_{k+1}}|\delta^nZ_s|^2\upd s\right|\mathcal{F}_{t_k}\right]^{p/2}\right]\right)\\
&\le C_p \left(\mathbb{E}\left[|Z_s^{n}-\bar{Z}_{t_k}^{n}|^p\right]+ (t_{k+1}-t_k)^{-p/2}\mathbb{E}\left[\left(\int_{t_k}^{t_{k+1}}|\delta^nZ_s|^2\upd s\right)^{p/2}\right]\right)\\
&= C_p \left(\mathbb{E}\left[|Z_s^{n}-\bar{Z}_{t_k}^{n}|^p\right]+ (t_{k+1}-t_k)^{-p/2}||\delta^nZ||_{\mathbb{H}_{[t_k,t_{k+1}]}^p}^p\right)\label{eq:ZbarHp}.
\end{align}
The second equality follows by \eqref{eq:equalityZnpi} and \eqref{eq:defZnbar} and the third and fourth inequalities by Jensen's inequality. Analogously, using the bound on $\rho$, we can prove
\begin{equation}
\mathbb{E}\left[|\Gamma_s^n-\bar{\Gamma}_{t_k}^{n,\pi}|^p\right]\le C_p \left(\mathbb{E}\left[|\Gamma_s^{n}-\bar{\Gamma}_{t_k}^{n}|^p\right]+ (t_{k+1}-t_k)^{-p/2}||\delta^nU||_{\mathbb{L}_{\mu^n,[t_k,t_{k+1}]}^p}^p\right).\label{eq:GammabarHp}
\end{equation}

\noindent \textbf{Step 1:} 
We apply Itô's formula to $|\delta^n Y_t|^p$ for $t\in[t_k,t_{k+1})$,
\begin{align}
\mathbb{E}\left[|\delta^n Y_t|^p\right]=&\ \mathbb{E}\left[|\delta^n Y_{t_{k+1}}|^p\right]+p\mathbb{E}\left[\int_t^{t_{k+1}}\delta^nY_s\ |\delta^n Y_s|^{p-2}\delta^nf(\Theta_s)\upd s\right]\\
& - \frac{p(p-1)}{2}\mathbb{E}\left[\int_t^{t_{k+1}}|\delta^nY_s|^{p-2}|\delta^nZ_s|^2\upd s\right]\\
&-\mathbb{E}\left[\int_t^{t_{k+1}}\int_{\mathbb{R}}\left(|\delta^nY_{s-}+\delta^nU_{s}(e)e|^p-|\delta^nY_{s-}|^p-p\delta^nY_{s-}\ |\delta^n Y_{s-}|^{p-2}\delta^nU_{s}(e)e\right)\nu^n(\upd e)\upd s\right].
\end{align}
As in the proof of Theorem \ref{thm:approxY} we use
\begin{align}
&\ - \mathbb{E}\left[\int_t^{t_{k+1}}\int_{\mathbb{R}}\left(|\delta^nY_{s-}+\delta^nU_{s}(e)e|^p-|\delta^nY_{s-}|^p-p\delta^nY_{s-}\ |\delta^n Y_{s-}|^{p-2}\delta^nU_{s}(e)e\right)\nu^n(\upd e)\upd s\right]\\
\le &\ - \kappa_p\mathbb{E}\left[\int_t^{t_{k+1}}\int_{\mathbb{R}}|\delta^nY_s|^{p-2}|\delta^nU_s(e)e|^2\nu^n(\upd e)\upd s\right],
\end{align}
with $\kappa_p=p(p-1)3^{1-p}$, to derive
\begin{align}
&\ \mathbb{E}\left[|\delta^n Y_t|^p\right]+\kappa_p \mathbb{E}\left[\int_t^{t_{k+1}}|\delta^nY_s|^{p-2}|\delta^nZ_s|^2\upd s\right]+ \kappa_p\mathbb{E}\left[\int_t^{t_{k+1}}\int_{\mathbb{R}}|\delta^nY_s|^{p-2}|\delta^nU_s(e)e|^2\nu^n(\upd e)\upd s\right]\\
\le &\ \mathbb{E}\left[|\delta^n Y_{t_{k+1}}|^p\right]+ p\mathbb{E}\left[\int_t^{t_{k+1}}\delta^nY_s\ |\delta^n Y_s|^{p-2}\delta^nf(\Theta_s)\upd s\right].
\end{align}
We use the Lipschitz condition \eqref{eq:LipschitzY} and that $t_{k+1}-t_k\le T/N$ to get
\begin{align}
&\ \mathbb{E}\left[|\delta^n Y_t|^p\right]+\kappa_p \mathbb{E}\left[\int_t^{t_{k+1}}|\delta^nY_s|^{p-2}|\delta^nZ_s|^2\upd s\right]+ \kappa_p\mathbb{E}\left[\int_t^{t_{k+1}}\int_{\mathbb{R}}|\delta^nY_s|^{p-2}|\delta^nU_s(e)e|^2\nu^n(\upd e)\upd s\right]\\
\le & \ \mathbb{E}\left[|\delta^n Y_{t_{k+1}}|^p\right]\\
&+ \mathbb{E}\left[\int_t^{t_{k+1}}\delta^nY_s\ |\delta^n Y_s|^{p-2}\left((T/N)^{1/2}+|X_s^{n}-X_{t_k}^{n,\pi}|+|Y_s^{n}-\bar{Y}_{t_k}^{n,\pi}|+|Z_s^{n}-\bar{Z}_{t_k}^{n,\pi}|+|\Gamma_s^{n}-\bar{\Gamma}_{t_k}^{n,\pi}|\right)\upd s\right].
\end{align}
We rewrite this inequality to have
\begin{align}
&\ \mathbb{E}\left[|\delta^n Y_t|^p\right]+\kappa_p \mathbb{E}\left[\int_t^{t_{k+1}}|\delta^nY_s|^{p-2}|\delta^nZ_s|^2\upd s\right]+ \kappa_p\mathbb{E}\left[\int_t^{t_{k+1}}\int_{\mathbb{R}}|\delta^nY_s|^{p-2}|\delta^nU_s(e)e|^2\nu^n(\upd e)\upd s\right]\\
\le & \ \mathbb{E}\left[|\delta^n Y_{t_{k+1}}|^p\right]\\
&+ \mathbb{E}\left[\int_t^{t_{k+1}}\delta^nY_s\ |\delta^n Y_s|^{p-2}\left((T/N)^{1/2}+|X_s^{n}-X_{t_k}^{n,\pi}|+|\delta^n Y_{t_k}|+|Y_s^{n}-Y_{t_k}^{n}|+|Z_s^{n}-\bar{Z}_{s}^{n}|+|\Gamma_s^{n}-\bar{\Gamma}_{s}^{n}|\right)\upd s\right]\\
&+ \mathbb{E}\left[\int_t^{t_{k+1}}\delta^nY_s\ |\delta^n Y_s|^{p-2}|\bar{Z}_{t_k}^n-\bar{Z}_{t_k}^{n,\pi}|\upd s\right]+ \mathbb{E}\left[\int_t^{t_{k+1}}\delta^nY_s\ |\delta^n Y_s|^{p-2}|\bar{\Gamma}_{t_k}^n-\bar{\Gamma}_{t_k}^{n,\pi}|\upd s\right].
\end{align}
We repeatedly use the inequality $ab\le \alpha a^2+b^2/\alpha$ to get
\begin{align}
&\ \mathbb{E}\left[|\delta^n Y_t|^p\right]+\kappa_p \mathbb{E}\left[\int_t^{t_{k+1}}|\delta^nY_s|^{p-2}|\delta^nZ_s|^2\upd s\right]+ \kappa_p\mathbb{E}\left[\int_t^{t_{k+1}}\int_{\mathbb{R}}|\delta^nY_s|^{p-2}|\delta^nU_s(e)e|^2\nu^n(\upd e)\upd s\right]\\
\le&\ \mathbb{E}\left[|\delta^n Y_{t_{k+1}}|^p\right]+ (\alpha+\beta+\gamma)\mathbb{E}\left[\int_t^{t_{k+1}}|\delta^nY_s|^p\upd s\right]\\
& +\ \frac{C_p}{\alpha}\mathbb{E}\left[\int_t^{t_{k+1}} |\delta^n Y_s|^{p-2}\left(N^{-1}+|X_s^{n}-X_{t_k}^{n,\pi}|^2+|\delta^n Y_{t_k}|^2+|Y_s^{n}-Y_{t_k}^{n}|^2+|Z_s^{n}-\bar{Z}_{s}^{n}|^2+|\Gamma_s^{n}-\bar{\Gamma}_{s}^{n}|^2\right)\upd s\right]\\
&+ \frac{1}{\beta}\mathbb{E}\left[\int_t^{t_{k+1}} |\delta^n Y_s|^{p-2}|\bar{Z}_{t_k}^n-\bar{Z}_{t_k}^{n,\pi}|^2\upd s\right]+ \frac{1}{\gamma}\mathbb{E}\left[\int_t^{t_{k+1}} |\delta^n Y_s|^{p-2}|\bar{\Gamma}_{t_k}^n-\bar{\Gamma}_{t_k}^{n,\pi}|^2\upd s\right].
\end{align}
Next we apply Young's inequality
\begin{align}
&\ \mathbb{E}\left[|\delta^n Y_t|^p\right]+\kappa_p \mathbb{E}\left[\int_t^{t_{k+1}}|\delta^nY_s|^{p-2}|\delta^nZ_s|^2\upd s\right]+ \kappa_p\mathbb{E}\left[\int_t^{t_{k+1}}\int_{\mathbb{R}}|\delta^nY_s|^{p-2}|\delta^nU_s(e)e|^2\nu^n(\upd e)\upd s\right]\\
\le&\ \mathbb{E}\left[|\delta^n Y_{t_{k+1}}|^p\right]+ C_p \left(\alpha+\beta+\gamma+\frac{1}{\alpha}\right)\mathbb{E}\left[\int_t^{t_{k+1}}|\delta^nY_s|^p\upd s\right]\\
& +\ \frac{C_p}{\alpha}\mathbb{E}\left[\int_t^{t_{k+1}}\left(N^{-p/2}+|X_s^{n}-X_{t_k}^{n,\pi}|^p+|\delta^n Y_{t_k}|^p+|Y_s^{n}-Y_{t_k}^{n}|^p+|Z_s^{n}-\bar{Z}_{s}^{n}|^p+|\Gamma_s^{n}-\bar{\Gamma}_{s}^{n}|^p\right)\upd s\right]\\
&+ \frac{1}{\beta}\mathbb{E}\left[\int_t^{t_{k+1}} |\delta^n Y_s|^{p-2}|\bar{Z}_{t_k}^n-\bar{Z}_{t_k}^{n,\pi}|^2\upd s\right]+ \frac{1}{\gamma}\mathbb{E}\left[\int_t^{t_{k+1}} |\delta^n Y_s|^{p-2}|\bar{\Gamma}_{t_k}^n-\bar{\Gamma}_{t_k}^{n,\pi}|^2\upd s\right].
\end{align}
Because we know from the second terms in the chain of inequalities \eqref{eq:ZbarHp}, we have 
\begin{equation}
\mathbb{E}\left[\int_t^{t_{k+1}}|\bar{Z}_{t_k}^n-\bar{Z}_{t_k}^{n,\pi}|^2\upd s\right]\le C_2 \mathbb{E}\left[\int_{t_k}^{t_{k+1}}|\delta^nZ_s|^2\upd s\right]
\end{equation}
and analogously
\begin{equation}
\mathbb{E}\left[\int_t^{t_{k+1}}|\bar{\Gamma}_{t_k}^n-\bar{\Gamma}_{t_k}^{n,\pi}|^2\upd s\right]\le C_2 \mathbb{E}\left[\int_{t_k}^{t_{k+1}}|\delta^nU_s|^2\upd s\right],
\end{equation}
for a constant $C_2>0$, we can choose $\beta,\gamma>0$ independent of $N$ such that
\begin{equation}
\kappa_p \mathbb{E}\left[\int_t^{t_{k+1}}|\delta^nY_s|^{p-2}|\delta^nZ_s|^2\upd s\right]\ge\frac{1}{\beta}\mathbb{E}\left[\int_t^{t_{k+1}} |\delta^n Y_s|^{p-2}|\bar{Z}_{t_k}^n-\bar{Z}_{t_k}^{n,\pi}|^2\upd s\right]
\end{equation}
and
\begin{equation}
\kappa_p\mathbb{E}\left[\int_t^{t_{k+1}}\int_{\mathbb{R}}|\delta^nY_s|^{p-2}|\delta^nU_s(e)e|^2\nu^n(\upd e)\upd s\right]\ge \frac{1}{\gamma}\mathbb{E}\left[\int_t^{t_{k+1}} |\delta^n Y_s|^{p-2}|\bar{\Gamma}_{t_k}^n-\bar{\Gamma}_{t_k}^{n,\pi}|^2\upd s\right].
\end{equation}
This and \eqref{eq:bdXdiscr} imply that
\begin{align}
\mathbb{E}\left[|\delta^n Y_t|^p\right]
\le&\ \mathbb{E}\left[|\delta^n Y_{t_{k+1}}|^p\right]+ C_p \left(\alpha+\frac{1}{\alpha}\right)\mathbb{E}\left[\int_t^{t_{k+1}}|\delta^nY_s|^p\upd s\right]\\
& +\ \frac{C_p}{\alpha}\int_t^{t_{k+1}}\mathbb{E}\left[N^{-p/2}+N^{-p}\mathfrak{m}^1(n)^{p}+|\delta^n Y_{t_k}|^p+|Y_s^{n}-Y_{t_k}^{n}|^p+|Z_s^{n}-\bar{Z}_{s}^{n}|^p+|\Gamma_s^{n}-\bar{\Gamma}_{s}^{n}|^p\right]\upd s,
\end{align}
for $t\in[t_k,t_{k+1})$ and thus
\begin{align}
\mathbb{E}\left[|\delta^n Y_t|^p\right]
\le&\ \mathbb{E}\left[|\delta^n Y_{t_{k+1}}|^p\right]+ C_p \left(\alpha+\frac{1}{\alpha}\right)\mathbb{E}\left[\int_t^{t_{k+1}}|\delta^nY_s|^p\upd s\right]\\
& +\ \frac{C_p}{\alpha}\left((t_{k+1}-t)\left(N^{-p/2}+N^{-p}\mathfrak{m}^1(n)^{p}+\mathbb{E}\left[|\delta^n Y_{t_k}|^p\right]\right)+\bar{B}_k\right),
\end{align}
where
\begin{equation}
\bar{B}_k:= \int_{t_k}^{t_{k+1}}\left(\mathbb{E}\left[|Y_s^{n}-Y_{t_k}^{n}|^p\right]+\mathbb{E}\left[|Z_s^{n}-\bar{Z}_{s}^{n}|^p\right]+\mathbb{E}\left[|\Gamma_s^{n}-\bar{\Gamma}_{s}^{n}|^p\right]\right)\upd s.
\end{equation}
Using Gronwall's Lemma, we can choose $\alpha$ independent of $N$ such that
\begin{equation}\label{eq:recurEuler1}
\mathbb{E}\left[|\delta^n Y_t|^p\right]\le \mathbb{E}\left[|\delta^n Y_{t_{k+1}}|^p\right]+C_p\left((t_{k+1}-t)\left(N^{-p/2}+N^{-p}\mathfrak{m}^1(n)^{p}+\mathbb{E}\left[|\delta^n Y_{t_k}|^p\right]\right)+\bar{B}_k\right).
\end{equation}
If we take $t=t_k$ in \eqref{eq:recurEuler1} we get
\begin{equation}\label{eq:recurEuler2}
\mathbb{E}\left[|\delta^n Y_{t_k}|^p\right]\le \mathbb{E}\left[|\delta^n Y_{t_{k+1}}|^p\right]+C_p\left((t_{k+1}-t_k)\left(N^{-p/2}+N^{-p}\mathfrak{m}^1(n)^{p}+\mathbb{E}\left[|\delta^n Y_{t_k}|^p\right]\right)+\bar{B}_k\right).
\end{equation}
Plugging \eqref{eq:recurEuler2} into \eqref{eq:recurEuler1} iteratively, combined with the Lipschitz condition for the terminal value $g(X_T^n)-g(X_T^{n,\pi})$ and the bound \eqref{eq:forwardEulerError} we obtain
\begin{equation}
\mathbb{E}\left[|\delta^n Y_{t}|^p\right]\le C_p \left(N^{-p/2}+N^{-p}\mathfrak{m}^1(n)^{p}+\bar{B}\right),
\end{equation}
for $t\in[0,T]$, where
\begin{equation}
\bar{B}:= \sum_{k=0}^{N+J^n-1}\bar{B}_k.
\end{equation}
We can take the supremum over all $t$ and conclude
\begin{equation}\label{eq:supdeltanY}
\sup_{0\le t\le T}\mathbb{E}\left[|\delta^n Y_{t}|^p\right]\le C_p \left(N^{-p/2}+N^{-p}\mathfrak{m}^1(n)^{p}+\bar{B}\right).
\end{equation}

\noindent \textbf{Step 2:}
We also can show that \eqref{eq:recurEuler1} holds for taking the supremum over $[t_k,t_{k+1})$, i.e.,
\begin{equation}\label{eq:recurEuler3}
\mathbb{E}\left[\sup_{t_k\le t<t_{k+1}}|\delta^n Y_t|^p\right]\le \mathbb{E}\left[|\delta^n Y_{t_{k+1}}|^p\right]+C_p\left((t_{k+1}-t)\left(N^{-p/2}+N^{-p}\mathfrak{m}^1(n)^{p}+\mathbb{E}\left[|\delta^n Y_{t_k}|^p\right]\right)+\bar{B}_k\right).
\end{equation}
This follows like in Step 1 by using Itô's formula (without the expectations)
\begin{align}
&\ |\delta^n Y_t|^p+\kappa_p \int_t^{t_{k+1}}|\delta^nY_s|^{p-2}|\delta^nZ_s|^2\upd s+ \kappa_p\int_t^{t_{k+1}}\int_{\mathbb{R}}|\delta^nY_s|^{p-2}|\delta^nU_s(e)e|^2\nu^n(\upd e)\upd s\\
\le&\ |\delta^n Y_{t_{k+1}}|^p+ C_p \left(\alpha+\beta+\gamma+\frac{1}{\alpha}\right)\int_t^{t_{k+1}}|\delta^nY_s|^p\upd s\\
& +\ \frac{C_p}{\alpha}\int_t^{t_{k+1}}\left(N^{-p/2}+|X_s^{n}-X_{t_k}^{n,\pi}|^p+|\delta^n Y_{t_k}|^p+|Y_s^{n}-Y_{t_k}^{n}|^p+|Z_s^{n}-\bar{Z}_{s}^{n}|^p+|\Gamma_s^{n}-\bar{\Gamma}_{s}^{n}|^p\right)\upd s\\
&+ \frac{1}{\beta}\int_t^{t_{k+1}} |\delta^n Y_s|^{p-2}|\bar{Z}_{t_k}^n-\bar{Z}_{t_k}^{n,\pi}|^2\upd s+ \frac{1}{\gamma}\int_t^{t_{k+1}} |\delta^n Y_s|^{p-2}|\bar{\Gamma}_{t_k}^n-\bar{\Gamma}_{t_k}^{n,\pi}|^2\upd s + M_t,\label{eq:mainproof8}
\end{align}
where 
\begin{equation}
M_t= \int_t^{t_{k+1}} \delta^n Y_{s-} |\delta^n Y_{s-}|^{p-2}\delta^n Z_s\upd B_s + \int_t^{t_{k+1}}\int_{\mathbb{R}}\delta^n Y_{s-}|\delta^n Y_{s-}|^{p-2}\delta^n U_s(e)e\widetilde{\mu^n}(\upd e,\upd s)
\end{equation}
denotes the martingales which can be handled with the Burkholder-Davis-Gundy inequality:
\begin{align}
&\mathbb{E}\left[\sup_{t_k\le t< t_{k+1}}\left|\int_t^{t_{k+1}} \delta^n Y_{s-} |\delta^n Y_{s-}|^{p-2}\delta^n Z_s\upd B_s\right|\right] \le C_p \mathbb{E}\left[\left(\int_{t_k}^{t_{k+1}}|\delta^n Y_{s}|^{2p-2}|\delta^n Z_s|^2\upd s\right)^{1/2}\right]\\
\le& \frac{1}{4p}\mathbb{E}\left[\sup_{t_k\le t< t_{k+1}}|\delta^n Y_t|^p\right]+pC_p^2 \mathbb{E}\left[\int_{t_k}^{t_{k+1}}|\delta^n Y_s|^{p-2}|\delta^n Z_s|^2\upd s\right]
\end{align}
and
\begin{align}
&\mathbb{E}\left[\sup_{t_k\le t< t_{k+1}}\left|\int_t^{t_{k+1}}\int_{\mathbb{R}}\delta^n Y_{s-}|\delta^n Y_{s-}|^{p-2}\delta^n U_s(e)e\widetilde{\mu^n}(\upd e,\upd s)\right|\right]\\
\le &C_p \mathbb{E}\left[\left(\int_{t_k}^{t_{k+1}}\int_{\mathbb{R}}|\delta^n Y_{s-}|^{2p-2}\delta^n U_s(e)^2e^2\mu^n(\upd e,\upd s)\right)^{1/2}\right]\\
\le &\frac{1}{4p}\mathbb{E}\left[\sup_{t_k\le t< t_{k+1}}|\delta^n Y_t|^p\right]+pC_p^2\mathbb{E}\left[\int_{t_k}^{t_{k+1}}\int_{\mathbb{R}}|\delta^n Y_s|^{p-2}|\delta^n U_s(e)|^2e^2 \nu^n(\upd e)\upd s\right].
\end{align}
Taking the supremum and expectations of \eqref{eq:mainproof8}, using the above two bounds and proceeding as in Step 1 yields \eqref{eq:recurEuler3}.

\noindent \textbf{Step 3:} 
The next step controls 
\begin{equation}
\mathbb{E}\left[\left(\int_{t_k}^{t_{k+1}}|\delta^nZ_s|^2\upd s\right)^{p/2}+\left(\int_{t_k}^{t_{k+1}}\int_{\mathbb{R}}|\delta^nU_s(e)|^2e^2\nu^n(\upd e)\upd s\right)^{p/2}\right].
\end{equation}
We start by applying Itô's formula to $|\delta^n Y_t|^2$ on $[t_k,t_{k+1})$
\begin{align}
& |\delta^nY_t|^2+ \int_t^{t_{k+1}}\delta^n Z_s^2\upd s+\int_t^{t_{k+1}}\int_{\mathbb{R}}\delta^n U_s(e)^2e^2\mu^n(\upd e,\upd s)\\
=&\ |\delta^n Y_{t_{k+1}}|^2+2\int_t^{t_{k+1}}\delta^nY_s\ \delta^nf(\Theta_s) \upd s\\
& \ -2\int_{t}^{t_{k+1}}\delta^n Y_{s-}\ \delta^n Z_s\upd B_s - 2\int_{t}^{t_{k+1}}\int_{\mathbb{R}}\delta^n Y_{s-}\ \delta^nU_s(e)e\widetilde{\mu^n}(\upd e,\upd s).
\end{align}
The Lipschitz and Hölder condition on $f$ then imply
\begin{align}
&  |\delta^nY_t|^2+\int_t^{t_{k+1}}\delta^n Z_s^2\upd s+\int_t^{t_{k+1}}\int_{\mathbb{R}}\delta^n U_s(e)^2e^2\mu^n(\upd e,\upd s)\\
\le&\ |\delta^n Y_{t_{k+1}}|^2+2\int_t^{t_{k+1}}\delta^nY_s\left((T/N)^{-1/2}+|X_s^n-X_s^{n,\pi}|+|Y_s^n-\bar{Y}_{t_k}^{n,\pi}|+|Z_s^n-\bar{Z}_{t_k}^{n,\pi}|+|\Gamma_s^n-\bar{\Gamma}_{t_k}^{n,\pi}|\right) \upd s\\
& \ -2\int_{t}^{t_{k+1}}\delta^n Y_{s-}\ \delta^n Z_s\upd B_s - 2\int_{t}^{t_{k+1}}\int_{\mathbb{R}}\delta^n Y_{s-}\ \delta^nU_s(e)e\widetilde{\mu^n}(\upd e,\upd s).
\end{align}
Again we use the inequality $ab\le \alpha a^2+b^2/\alpha$ to get
\begin{align}
&  |\delta^nY_t|^2+\int_t^{t_{k+1}}\delta^n Z_s^2\upd s+\int_t^{t_{k+1}}\int_{\mathbb{R}}\delta^n U_s(e)^2e^2\mu^n(\upd e,\upd s)\\
\le&\ |\delta^n Y_{t_{k+1}}|^2+(\alpha+\beta+\gamma)\int_t^{t_{k+1}}|\delta^nY_s|^2\upd s\\
&\ + \frac{C_p}{\alpha}\int_t^{t_{k+1}}\left(N^{-1}+|X_s^n-X_s^{n,\pi}|^2+|\delta^nY_{t_k}|^2+|\bar{Z}_{t_k}^{n}-\bar{Z}_{t_k}^{n,\pi}|^2+|\bar{\Gamma}_{t_k}^{n}-\bar{\Gamma}_{t_k}^{n,\pi}|^2\right) \upd s\\
&\ +\frac{C_p}{\alpha}\int_t^{t_{k+1}}|Y_s^n-Y_{t_k}^{n}|^2\upd s +   \frac{C_p}{\beta}\int_t^{t_{k+1}}|Z_s^n-\bar{Z}_{s}^{n}|^2 \upd s+   \frac{C_p}{\gamma}\int_t^{t_{k+1}}|\Gamma_s^n-\bar{\Gamma}_{s}^{n}|^2 \upd s\\
& \  -2\int_{t}^{t_{k+1}}\delta^n Y_{s-}\ \delta^n Z_s\upd B_s - 2\int_{t}^{t_{k+1}}\int_{\mathbb{R}}\delta^n Y_{s-}\ \delta^nU_s(e)e\widetilde{\mu^n}(\upd e,\upd s).
\end{align}
Next we take powers
\begin{align}
&  |\delta^nY_t|^p+\left(\int_t^{t_{k+1}}\delta^n Z_s^2\upd s\right)^{p/2} + \left(\int_t^{t_{k+1}}\int_{\mathbb{R}}\delta^n U_s(e)^2e^2\mu^n(\upd e,\upd s)\right)^{p/2}\\
\le&\ C_p |\delta^n Y_{t_{k+1}}|^p + C_p(\alpha+\beta+\gamma)^{p/2}\left|\int_t^{t_{k+1}}|\delta^nY_s|^2\upd s\right|^{p/2}+ \frac{C_p}{\alpha^{p/2}}N^{-p} + \frac{C_p}{\alpha^{p/2}}\left|\int_t^{t_{k+1}}|X_s^n-X_s^{n,\pi}|^2\upd s\right|^{p/2}\\
&\  + \frac{C_p}{\alpha^{p/2}}N^{-p/2}|\delta^nY_{t_k}|^p+\frac{C_p}{\beta^{p/2}}(t_{k+1}-t_k)^{p/2}|\bar{Z}_{t_k}^{n}-\bar{Z}_{t_k}^{n,\pi}|^p+\frac{C_p}{\gamma^{p/2}}(t_{k+1}-t_k)^{p/2}|\bar{\Gamma}_{t_k}^{n}-\bar{\Gamma}_{t_k}^{n,\pi}|^p \\
&\ +\frac{C_p}{\alpha}\left|\int_t^{t_{k+1}}\left(|Y_s^n-Y_{t_k}^{n}|^2+|Z_s^n-\bar{Z}_{s}^{n}|^2+|\Gamma_s^n-\bar{\Gamma}_{s}^{n}|^2\right) \upd s\right|^{p/2}\\
& \  +C_p\left|\int_{t}^{t_{k+1}}\delta^n Y_{s-}\ \delta^n Z_s\upd B_s\right|^{p/2} + C_p\left|\int_{t}^{t_{k+1}}\int_{\mathbb{R}}\delta^n Y_{s-}\ \delta^nU_s(e)e\widetilde{\mu^n}(\upd e,\upd s)\right|^{p/2},
\end{align}
and expectations to get
\begin{align}
&  \mathbb{E}\left[|\delta^nY_t|^p\right]+\mathbb{E}\left[\left(\int_t^{t_{k+1}}\delta^n Z_s^2\upd s\right)^{p/2} + \left(\int_t^{t_{k+1}}\int_{\mathbb{R}}\delta^n U_s(e)^2e^2\mu^n(\upd e,\upd s)\right)^{p/2}\right]\\
\le&\ C_p \mathbb{E}\left[|\delta^n Y_{t_{k+1}}|^p\right] + C_p(\alpha+\beta+\gamma)^{p/2}\mathbb{E}\left[\left|\int_t^{t_{k+1}}|\delta^nY_s|^2\upd s\right|^{p/2}\right]+ \frac{C_p}{\alpha^{p/2}}N^{-p}\\
&\ + \frac{C_p}{\alpha^{p/2}}\mathbb{E}\left[\left|\int_t^{t_{k+1}}|X_s^n-X_s^{n,\pi}|^2\upd s\right|^{p/2}\right]+ \frac{C_p}{\alpha^{p/2}}N^{-p/2}\mathbb{E}\left[|\delta^nY_{t_k}|^p\right]\\
&\  + \frac{C_p}{\beta^{p/2}}(t_{k+1}-t_k)^{p/2}\mathbb{E}\left[|\bar{Z}_{t_k}^{n}-\bar{Z}_{t_k}^{n,\pi}|^p\right]+\frac{C_p}{\gamma^{p/2}}(t_{k+1}-t_k)^{p/2}\mathbb{E}\left[|\bar{\Gamma}_{t_k}^{n}-\bar{\Gamma}_{t_k}^{n,\pi}|^p\right] \\
&\ +\frac{C_p}{\alpha}\mathbb{E}\left[\left|\int_{t_k}^{t_{k+1}}\left(|Y_s^n-Y_{t_k}^{n}|^2+|Z_s^n-\bar{Z}_{s}^{n}|^2+|\Gamma_s^n-\bar{\Gamma}_{s}^{n}|^2\right) \upd s\right|^{p/2}\right]\\
& \  +C_p\mathbb{E}\left[\left|\int_{t}^{t_{k+1}}\delta^n Y_{s-}\ \delta^n Z_s\upd B_s\right|^{p/2}\right] + C_p\mathbb{E}\left[\left|\int_{t}^{t_{k+1}}\int_{\mathbb{R}}\delta^n Y_{s-}\ \delta^nU_s(e)e\widetilde{\mu^n}(\upd e,\upd s)\right|^{p/2}\right].\label{eq:mainproof7}
\end{align}
We discuss the terms in \eqref{eq:mainproof7} separately. First, we recall that by \eqref{eq:recurEuler2} $\mathbb{E}\left[|\delta^n Y_{t_{k}}|^p\right]\le \mathbb{E}\left[|\delta^n Y_{t_{k+1}}|^p\right]+C_p\left((t_{k+1}-t_k)\left(N^{-p/2}+N^{-p}\mathfrak{m}^1(n)^{p}+\mathbb{E}\left[|\delta^n Y_{t_k}|^p\right]\right)+\bar{B}_k\right)$ and, by \eqref{eq:recurEuler3} and additionally invoking Jensen's inequality,
\begin{align}
&\mathbb{E}\left[\left|\int_t^{t_{k+1}}|\delta^nY_s|^2\upd s\right|^{p/2}\right]\le C_p N^{-p/2}\mathbb{E}\left[\sup_{t_k\le s< t_{k+1}}|\delta^n Y_{s}|^p\right]\\
\le&\ C_p \left(N^{-p/2}\mathbb{E}\left[|\delta^n Y_{t_{k+1}}|^p\right]+N^{-p-1}+N^{-p-p/2-1}\mathfrak{m}^1(^n)^{p}+N^{-p/2-1}\mathbb{E}\left[|\delta^n Y_{t_k}|^p\right]\right).
\end{align}
Second, in a similar manner, the term with the forward SDE $X$ is bounded by $C_p\left( N^{-p}+N^{-p-p/2}\mathfrak{m}^1(n)^{p}\right)$. Third, recalling \eqref{eq:ZbarHp} and \eqref{eq:GammabarHp} we note that
\begin{equation}
(t_{k+1}-t_k)^{p/2}\mathbb{E}\left[|\bar{Z}_{t_k}^{n}-\bar{Z}_{t_k}^{n,\pi}|^p\right]\le C_p ||\delta^nZ||_{\mathbb{H}_{[t_k,t_{k+1}]}^p}^p
\end{equation}
and
\begin{equation}
(t_{k+1}-t_k)^{p/2}\mathbb{E}\left[|\bar{\Gamma}_{t_k}^{n}-\bar{\Gamma}_{t_k}^{n,\pi}|^p\right]\le C_p ||\delta^nU||_{\mathbb{L}_{\mu^n,[t_k,t_{k+1}]}^p}^p.
\end{equation}
Fourth, by Jensen's inequality
\begin{equation}
\mathbb{E}\left[\left|\int_{t_k}^{t_{k+1}}\left(|Y_s^n-Y_{t_k}^{n}|^2+|Z_s^n-\bar{Z}_{s}^{n}|^2+|\Gamma_s^n-\bar{\Gamma}_{s}^{n}|^2\right) \upd s\right|^{p/2}\right]\le C_p N^{-p/2+1}\bar{B}_k.
\end{equation}
Finally, we can apply the Burkholder-Davis-Gundy inequality and Young's inequality to the martingales in \eqref{eq:mainproof7}:
\begin{align}
&C_p\mathbb{E}\left[\left|\int_{t}^{t_{k+1}}\delta^n Y_{s-}\ \delta^n Z_s\upd B_s\right|^{p/2}\right]\\
\le &\ C_p \mathbb{E}\left[\left(\int_{t_k}^{t_{k+1}}|\delta^n Y_{s-}|^2\ |\delta^n Z_s|^2\upd s\right)^{p/4}\right]\\
\le&\ \frac{C_p^2}{4}\mathbb{E}\left[\sup_{t_k\le s<t_{k+1}} |\delta^nY_s|^p\right]+\frac{1}{2}\mathbb{E}\left[\left(\int_{t_k}^{t_{k+1}}|\delta^n Z_s|^2\upd s\right)^{p/2}\right]\\
\le&\ C_p\mathbb{E}\left[|\delta^n Y_{t_{k+1}}|^p\right]+C_p\left((t_{k+1}-t)\left(N^{-p/2}+N^{-p}\mathfrak{m}^1(n)^{p}+\mathbb{E}\left[|\delta^n Y_{t_k}|^p\right]\right)+\bar{B}_k\right)\\
&\ \ \ +\frac{1}{2}\mathbb{E}\left[\left(\int_{t_k}^{t_{k+1}}|\delta^n Z_s|^2\upd s\right)^{p/2}\right],
\end{align}
where the last inequality follows by \eqref{eq:recurEuler3}. Analogously,
\begin{align}
&C_p\mathbb{E}\left[\left|\int_{t}^{t_{k+1}}\int_{\mathbb{R}}\delta^n Y_{s-}\ \delta^nU_s(e)e\widetilde{\mu^n}(\upd e,\upd s)\right|^{p/2}\right]\\
\le&\ C_p \mathbb{E}\left[\left(\int_{t_k}^{t_{k+1}}|\delta^n Y_{s-}|^2\ |\delta^n U_s(e)e|^2\mu^n(\upd e,\upd s)\right)^{p/4}\right]\\
\le&\ \frac{C_p^2}{4}\mathbb{E}\left[\sup_{t_k\le s<t_{k+1}} |\delta^nY_s|^p\right]+\frac{1}{2}\mathbb{E}\left[\left(\int_{t_k}^{t_{k+1}}|\delta^n U_s(e)e|^2\mu^n(\upd e,\upd s)\right)^{p/2}\right]\\
\le&\ C_p\mathbb{E}\left[|\delta^n Y_{t_{k+1}}|^p\right]+C_p\left((t_{k+1}-t)\left(N^{-p/2}+N^{-p}\mathfrak{m}^1(n)^{p}+\mathbb{E}\left[|\delta^n Y_{t_k}|^p\right]\right)+\bar{B}_k\right)\\
&\ \ \ +\frac{1}{2}\mathbb{E}\left[\left(\int_{t_k}^{t_{k+1}}|\delta^n U_s(e)e|^2\mu^n(\upd e,\upd s)\right)^{p/2}\right].
\end{align}
We again use
\begin{equation}
\mathbb{E}\left[\left(\int_{t_k}^{t_{k+1}}|\delta^n U_s(e)e|^2\nu^n(\upd e)\upd s\right)^{p/2}\right]\le d_p\mathbb{E}\left[ \left(\int_{t_k}^{t_{k+1}}|\delta^n U_s(e)e|^2\mu^n(\upd e,\upd s)\right)^{p/2}\right]
\end{equation}
and conclude that, for $t=t_k$, we can choose constants $\alpha$, $\beta$ and $\gamma$ independent of $N$ such that \eqref{eq:mainproof7} can be simplified to
\begin{align}
&  \mathbb{E}\left[|\delta^nY_{t_k}|^p\right]+\mathbb{E}\left[\left(\int_{t_k}^{t_{k+1}}\delta^n Z_s^2\upd s\right)^{p/2} + \left(\int_{t_k}^{t_{k+1}}\int_{\mathbb{R}}\delta^n U_s(e)^2e^2\mu^n(\upd e,\upd s)\right)^{p/2}\right]\\
\le&\ C_p \mathbb{E}\left[|\delta^n Y_{t_{k+1}}|^p\right]+C_p\left((t_{k+1}-t_k)\left(N^{-p/2}+N^{-p}\mathfrak{m}^1(n)^{p}+\mathbb{E}\left[|\delta^n Y_{t_k}|^p\right]\right)+\bar{B}_k\right).\label{eq:mainproof9}
\end{align}
Now we can sum up equation \eqref{eq:mainproof9}. Together with \eqref{eq:recurEuler2} and the Lipschitz condition for the terminal value we obtain
\begin{equation}
\mathbb{E}\left[\left(\int_{0}^{T}\delta^n Z_s^2\upd s\right)^{p/2} + \left(\int_{0}^{T}\int_{\mathbb{R}}\delta^n U_s(e)^2e^2\mu^n(\upd e,\upd s)\right)^{p/2}\right] \le C_p\left(N^{-p/2}+N^{-p}\mathfrak{m}^1(n)^{p}+\bar{B}\right).
\end{equation}
Joining Step 1 with Step 3 then implies that
\begin{equation}
\label{eq:EulerErrBbar}
Err_{\pi,p}(Y^n,Z^n,U^n)^p\le C_p \left( N^{-p/2}+N^{-p}\mathfrak{m}^1(n)^{p}+\bar{B}\right).
\end{equation}

\noindent \textbf{Step 4:} 
It remains to show that $\bar{B}\le C_pN^{-p/2}$. For the first term in $\bar{B}$, we recall that $Y^n$ solves \eqref{eq:nYSDEv3} and hence
\begin{equation}
\mathbb{E}\left[|Y_t^n-Y_{t_k}^n|^p\right]\le C_p\int_{t_k}^t\mathbb{E}\left[|f(s,X_s^n,Y_s^n,Z_s^n,\Gamma_s^n)|^p+|Z_s^n|^p+\int_{\mathbb{R}}|U_s^n(e)e|^p\nu(\upd e)\right]\upd s.
\end{equation}
The Lipschitz property of $f$ combined with \eqref{eq:standardestimate} implies
\begin{equation}
\sum_{k=0}^{N+J^n-1}\int_{t_k}^{t_{k+1}}\mathbb{E}\left[|Y_t^n-Y_{t_k}^n|^p\right]\upd t \le C_pN^{-p/2}.
\end{equation} 

For the second and third term of $\bar{B}$ we use Assumption \ref{ass:invofh}. We follow the proofs of \cite{Bouchard2008} (who used a multivariate version of Assumption \ref{ass:invofh}). \citeauthor{Bouchard2008} (\citeyear[Propositions 4.5-4.6 \& Theorem 2.1]{Bouchard2008}) proved that the regularities of $Z^n$ and $\Gamma^n$ are bounded by $C_2 N^{-1}$ for $p=2$. Replacing $p=2$ with a $p\ge2$ is a straightforward extension of their proofs. \cite{Zhang2004} and  proved this is independent of the specific partition only depending on its mesh $1/N$. This implies $\bar{B}\le C_pN^{-p/2}$ and finally the statement follows by joining Steps 1-4.
\end{proof}

\begin{rem}\label{rem:Ass2}
Following the argument of \cite{Bouchard2008}, if Assumption \ref{ass:invofh}, which is a one-dimensional special case of their Assumption \textbf{H}, does not hold the error bound \eqref{eq:EulerErr} is not valid anymore. This is because the regularity in $Z$, i.e., $||Z^n-\bar{Z}^n||_{\mathbb{H}^p}^p$ is not bounded by $C_pN^{-p/2}$ in this case. However, one can show without using Assumption \ref{ass:invofh} that, for any $\varepsilon>0$, there exists a constant $C_{p,\varepsilon}$ such that
\begin{equation}
||Z^n-\bar{Z}^n||_{\mathbb{H}^p}^p\le C_{p,\varepsilon} N^{-p/2+\varepsilon}.
\end{equation}
Note that the regularities of $Y^n$ and $\Gamma^n$ remain unaffected whether Assumption \ref{ass:invofh} is fulfilled or not, i.e., $||Y^n-\bar{Y}^n||_{\mathcal{S}^p}^p\le C_pN^{-p/2}$ and $||\Gamma^n-\bar{\Gamma}^n||_{\mathbb{H}^p}^p\le C_pN^{-p/2}$ even without Assumption \ref{ass:invofh}. Furthermore, if either $a\equiv0$, or the generator $f$ is independent of $Z$ Theorem \ref{thm:EulerErr} holds without Assumption \ref{ass:invofh}, see \cite{Elie2006} for a discussion.
\end{rem}

\begin{rem}
Theorem \ref{thm:EulerErr} can equivalently formulated as that under the above assumptions there exists a constant $C_p$ independent of $n$ and $\pi$
such that
\begin{equation}
Err_{\pi,p}(Y^n,Z^n,U^n)\le C_p \mathbb{E}\left[\sup_{t\in[0,T]}|X_t^n-X_t^{n,\pi}|^p\right]^{1/p}.
\end{equation}
\end{rem}

\begin{rem}
Instead of the implicit scheme \eqref{eq:EulerSchemeY}, one could use an explicit scheme where we replace $\bar{Y}_{t_k}^{n,\pi}$ by $\bar{Y}_{t_{k+1}}^{n,\pi}$ in the argument of $h$. The advantage is that we do not need a fixed-point procedure in this case. One disadvantage is that the conditional expectations are more difficult to estimate. We refer to \cite{Bouchard2008} and \cite{Elie2006} for details.
\end{rem}


Using Theorems \ref{thm:approxY} and \ref{thm:EulerErr}, we deduce a bound for the approximation-discretization error between the original backward SDE \eqref{eq:YSDEv3} and the scheme \eqref{eq:EulerSchemeY} which is defined as
\begin{align}
Err_{n,\pi,p}(Y,Z,U):=&\ \left(\sup_{0\le t\le T} \mathbb{E}\left[|Y_t-\bar{Y}_t^{n,\pi}|^p\right]+||Z-\bar{Z}^{n,\pi}||_{\mathbb{H}^p}^p+||\Gamma-\bar{\Gamma}^{n,\pi}||_{\mathbb{H}^p}^p\right)^{1/p}.
\end{align}
The approximation-discretization error for the forward SDE
\begin{equation}\label{eq:forwardError}
\max_{k<N}\mathbb{E}\left[\sup_{t\in[t_k,t_{k+1}]}|X_t-X_t^{n,\pi}|^p\right]\le C_p \left(n^{-p/2}+\sigma^p(n)+\sigma^2(n)^{p/2}+N^{-p}\mathfrak{m}^1(n)^{p}\right),
\end{equation} 
is straightforward combining \eqref{eq:approxerrorX} with \eqref{eq:forwardEulerError}.

\begin{cor}\label{cor:Error}
Under Assumptions \ref{ass:Lipschitz} and \ref{ass:invofh}, the approximation-discretization error is bounded by
\begin{equation}
\label{eq:Error}
Err_{n,\pi,p}(Y,Z,U)\le C_p \left(N^{-1/2}+\sigma^p(n)^{1/p}+\sigma^2(n)^{1/2}+N^{-1}\mathfrak{m}^1(n)\right).
\end{equation}
\end{cor}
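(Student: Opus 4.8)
The plan is to obtain the bound by a triangle inequality that inserts the intermediate approximate FBSDE $(Y^n,Z^n,\Gamma^n)$ and then invokes the two results already established, Theorem~\ref{thm:approxY} for the approximation error and Theorem~\ref{thm:EulerErr} for the discretization error. For each component I would write $Y_t-\bar{Y}_t^{n,\pi}=(Y_t-Y_t^n)+(Y_t^n-\bar{Y}_t^{n,\pi})$, and likewise $Z-\bar{Z}^{n,\pi}=(Z-Z^n)+(Z^n-\bar{Z}^{n,\pi})$ and $\Gamma-\bar{\Gamma}^{n,\pi}=(\Gamma-\Gamma^n)+(\Gamma^n-\bar{\Gamma}^{n,\pi})$, and then apply the elementary inequality $|a+b|^p\le 2^{p-1}(|a|^p+|b|^p)$ inside the respective norms.

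First I would dispatch the discretization halves. The quantities $\sup_{0\le t\le T}\mathbb{E}[|Y_t^n-\bar{Y}_t^{n,\pi}|^p]$, $\|Z^n-\bar{Z}^{n,\pi}\|_{\mathbb{H}^p}^p$ and $\|\Gamma^n-\bar{\Gamma}^{n,\pi}\|_{\mathbb{H}^p}^p$ are precisely the three ingredients of $Err_{\pi,p}(Y^n,Z^n,U^n)^p$, so by \eqref{eq:EulerErr} their sum is bounded by $C_p(N^{-p/2}+N^{-p}\mathfrak{m}^1(n)^p)$.

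Next the approximation halves. For $Y$ I would use $\sup_{0\le t\le T}\mathbb{E}[|Y_t-Y_t^n|^p]\le\mathbb{E}[\sup_{0\le t\le T}|Y_t-Y_t^n|^p]$, which is one summand of $Err_{n,p}(Y,Z,U)^p$, and $\|Z-Z^n\|_{\mathbb{H}^p}^p$ appears there directly. The only term that needs genuine work is $\|\Gamma-\Gamma^n\|_{\mathbb{H}^p}^p$, since $Err_{n,p}$ records no $\Gamma$-contribution but instead the two $U$-type quantities. Here I would use $\nu=\nu^n+\bar{\nu}^n$ to split $\Gamma_s-\Gamma_s^n=\int_{\mathbb{R}}\rho(e)(U_s(e)-U_s^n(e))e\,\nu^n(\upd e)+\int_{\mathbb{R}}\rho(e)U_s(e)e\,\bar{\nu}^n(\upd e)$, and then apply the Hölder/$\rho$-bound of Remark~\ref{rem:boundonrho} to each integral to obtain $|\Gamma_s-\Gamma_s^n|^2\le C(\int_{\mathbb{R}}|U_s(e)-U_s^n(e)|^2e^2\nu^n(\upd e)+\int_{\mathbb{R}}U_s(e)^2e^2\bar{\nu}^n(\upd e))$. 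Raising to the power $p/2$, integrating in $s$ and taking expectations then bounds $\|\Gamma-\Gamma^n\|_{\mathbb{H}^p}^p$ by exactly the last two summands of $Err_{n,p}(Y,Z,U)^p$, so \eqref{eq:approxerrorY} gives $\le C_p(\sigma^p(n)+\sigma^2(n)^{p/2})$ for the whole approximation half.

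Combining the two halves yields $Err_{n,\pi,p}(Y,Z,U)^p\le C_p(\sigma^p(n)+\sigma^2(n)^{p/2}+N^{-p/2}+N^{-p}\mathfrak{m}^1(n)^p)$, and taking $p$-th roots together with the subadditivity of $x\mapsto x^{1/p}$ on $[0,\infty)$ separates this into the four terms of \eqref{eq:Error}. The main obstacle is the $\Gamma$-bridging step: unlike $Y$ and $Z$, the driver argument $\Gamma$ is not tracked directly by $Err_{n,p}$, so its approximation error must be converted into the $U$-type norms, and it is the careful treatment of the discarded-jumps part over $\bar{\nu}^n$ through the $\rho$-bound that lets the $\sigma^2(n)$ contribution appear with the right exponent.
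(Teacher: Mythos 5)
Your proposal is correct and follows essentially the same route as the paper: a triangle inequality through the intermediate solution $(Y^n,Z^n,\Gamma^n)$, invoking Theorems~\ref{thm:approxY} and \ref{thm:EulerErr}, with the only nontrivial step being the conversion of $\|\Gamma-\Gamma^n\|_{\mathbb{H}^p}^p$ into the two $U$-type norms of $Err_{n,p}$ via the splitting $\nu=\nu^n+\bar{\nu}^n$ and the $\rho$-bound of Remark~\ref{rem:boundonrho} --- which is exactly the paper's argument.
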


\begin{proof}
This is an easy consequence because
\begin{align}
Err_{n,\pi,p}(Y,Z,U)^p \le&\ C_p \Biggr( \sup_{0\le t\le T} \mathbb{E}\left[|Y_t-Y_t^n|^p+|Y_t^n-\bar{Y}_t^{n,\pi}|^p\right]+||Z-Z^n||_{\mathbb{H}^p}^p+||Z^n-\bar{Z}^{n,\pi}||_{\mathbb{H}^p}^p\\
&\ \ \ \ \ +||\Gamma-\Gamma^{n}||_{\mathbb{H}^p}^p+||\Gamma^n-\bar{\Gamma}^{n,\pi}||_{\mathbb{H}^p}^p\Biggr).
\end{align}
Using Remark \ref{rem:boundonrho} we can show
\begin{equation}
\left(\int_{\mathbb{R}}\rho(e)(U_s(e)-U_s^n(e))e\nu^n(\upd e)\right)^2\le C_p \int_{\mathbb{R}}(U_s(e)-U_s^n(e))^2e^2\nu^n(\upd e)
\end{equation}
and
\begin{equation}
\left(\int_{\mathbb{R}}\rho(e)U_s(e)e\bar{\nu}^n(\upd e)\right)^2\le C_p \int_{\mathbb{R}}U_s(e)^2e^2\bar{\nu}^n(\upd e)
\end{equation}
which imply
\begin{equation}
||\Gamma-\Gamma^{n}||_{\mathbb{H}^p}^p\le C_p \mathbb{E}\left[\left(\int_0^T\int_{\mathbb{R}}(U_s(e)-U_s^n(e))^2e^2\nu^n(\upd e)\upd s\right)^{p/2}+\left(\int_0^T\int_{\mathbb{R}}U_s(e)^2e^2\bar{\nu}^n(\upd e)\upd s\right)^{p/2}\right],
\end{equation}
and thus the result follows.
\end{proof}

We end this paper with some remarks about implementation of the scheme in practice.

\begin{rem}
The proposed scheme is not fully implementable in practice. One key step is the computation of the conditional expectations in \eqref{eq:EulerSchemeY} which has to be performed numerically. There are several methods to estimate these. Among them there are nonparametric kernel regression \cite[]{Bouchard2004,Lemor2006}, Malliavin regression \cite[]{Bouchard2004}, quantization \cite[]{Bally2003} and some other approaches. We discuss the nonparametric regression approach in some more detail which works by simulating $1\le m\le M$ paths $X^{n,\pi,m}$ of $X^{n,\pi}$ and initialize $\bar{Y}_T^{n,\pi,m}=g(X_T^{n,\pi,m})$. Then we regress $\bar{Y}_{t_{k+1}}^{n,\pi,m}$ and $\bar{Y}_{t_{k+1}}^{n,\pi,m}\Delta B_{k+1}^m$ and $\bar{Y}_{t_{k+1}}^{n,\pi,m}\int_{\mathbb{R}}\rho(e)e\widetilde{\mu^{n,m}}(\upd e,(t_k,t_{k+1}])$ on $X_{t_k}^{n,\pi,m}$. Details are presented in \cite{Elie2006}.

To compute the $L^p$ error between the original backward SDE and the numerical backward SDE taking into account approximation of the jump process, discretization and estimation of conditional expectations we have to sum up the error of Corollary \ref{cor:Error}, the error of a localization procedure and the statistical error by the kernel regression. \cite{Elie2006} derived the $L^p$ error of the localization procedure. Furthermore, \cite{Elie2006} derived the statistical error which is in terms of the Euclidean norm on $\mathbb{R}^M$. Since all norms on $\mathbb{R}^M$ are equivalent it is not much work to deduce a bound for the error in terms of the $p$-norm. All in all, if we choose some other parameters in the algorithm large enough, we can conclude that the total error is of the order $N^{-1/2}+\sigma^p(n)^{1/p}+\sigma^2(n)^{1/2}+N^{-1}\mathfrak{m}^1(n)$ under Assumptions \ref{ass:Lipschitz} and \ref{ass:invofh}.
\end{rem}

\section*{Acknowledgements}
Financial support of the German Research Foundation (Deutsche Forschungsgemeinschaft, DFG) via the Collaborative Research Center ``Statistical modelling of nonlinear dynamic processes'' (SFB 823, Teilprojekt A4) and via the DFG project 455257011 is gratefully acknowledged.

The author thanks two anonymous referees for suggestions which helped to substantially improve this paper. The author is grateful to Alexandre Popier and Christoph Hanck for valuable comments and discussions. Full responsibility is taken for
all remaining errors.

\bibliography{bibliography}
\bibliographystyle{agsm}

\end{document}